\theoremstyle{plain}
\newtheorem{theorem}                {Theorem}      [section]
\newtheorem*{theorem*}                {Theorem}
\newtheorem{corollary}    [theorem]  {Corollary}
\newtheorem{lemma}        [theorem]  {Lemma}
\theoremstyle{definition}
\newtheorem{remark}       [theorem]  {Remark}
\newtheorem{definition}   [theorem]  {Definition}
\DeclareMathOperator{\trace}{trace} 
\DeclareMathOperator{\Div}{div}
\DeclareMathOperator{\grad}{grad}
\def \final{\mbox{\tiny{$\mathbb{C}P^{p+q+pq}$}}}
\def \Aphi{\mbox{$A^{\phi}$}}
\def \Apsi{\mbox{$A^{\psi}$}}
\def \Bphi{\mbox{$B^{\phi}$}}
\def \Bpsi{\mbox{$B^{\psi}$}}
\def \prod{\mbox{\tiny{$\mathbb{C}P^p\times\mathbb{C}P^q$}}}
\def \prodd{\mbox{$\mathbb{C}P^p\times\mathbb{C}P^q$}}
\numberwithin{equation}{section}
\begin{document}

\title[Segre embedding and biharmonicity]{Segre embedding and biharmonicity}

\author{Hiba~Bibi}

\author{Dorel~Fetcu}

\author{Cezar~Oniciuc}

\thanks{}\dedicatory{Dedicated to Professor Bang--Yen Chen on the occasion of his 80th
birthday}

\address{Institut Denis Poisson, CNRS UMR 7013\\ 
Universit\'e de Tours, Universit\'e d'Orl\'eans\\
Parc de Grandmont, 37200 Tours, France} \email{hiba.bibi@univ-tours.fr}

\address{Department of Mathematics and Informatics\\
Gh. Asachi Technical University of Iasi\\
Bd. Carol I, 11 A\\
700506 Iasi, Romania} \email{dorel.fetcu@academic.tuiasi.ro}

\address{Faculty of Mathematics\\ Al. I. Cuza University of Iasi\\ 
Bd. Carol I, 11\\ 700506 Iasi, Romania} \email{oniciucc@uaic.ro}

\keywords {Biconservative Submanifolds, Biharmonic Submanifolds, Segre Embedding}

\subjclass{32V40, 53C40, 31B30, 53C42}

\begin{abstract} We consider the Segre embedding of the product $\mathbb{C}P^p\times\mathbb{C}P^q$ into $\mathbb{C}P^{p+q+pq}$ and study the biharmonicity of $M^p\times\mathbb{C}P^q$ and $M^p_1\times M^q_2$ as submanifolds of $\mathbb{C}P^{p+q+pq}$, where $M$ and $M_1$ are Lagrangian submanifolds of $\mathbb{C}P^p$ and $M_2$ is a Lagrangian submanifold of $\mathbb{C}P^q$. We find two new large classes of biharmonic submanifolds in complex projective space forms.
\end{abstract}

\maketitle

\section{Introduction}

In mid-$1980$'s, B.-Y.~Chen  \cite{Chen0} defined biharmonic submanifolds of Euclidean spaces $\mathbb{E}^{n}$ as isometric immersions with harmonic mean curvature vector field. In the same period, biharmonic maps between Riemannian manifolds were defined independently by G.-Y.~Jiang \cite{Jiang2009, Jiang}, at a more abstract level, as critical points of the $L^{2}$-norm of the tension field, as previously suggested in $1964$ by J.~Eells and J.~H.~Sampson \cite{ES}. This variational definition coincides with the one proposed by B.-Y.~Chen when the ambient space is $\mathbb{E}^n$ and the map is an isometric immersion. As all harmonic maps (minimal submanifolds in the case of immersions) are biharmonic, the interesting case is that of proper-biharmonic maps (submanifolds), i.e., biharmonic maps (submanifolds) which are not harmonic (minimal). In the same paper \cite{Chen0}, it is conjectured that there are no proper-biharmonic submanifolds in $\mathbb{E}^n$. However, a multitude of examples of proper-biharmonic submanifolds exist in other ambient spaces. The most studied case is that of biharmonic submanifolds in spheres (detailed accounts on these studies can be found  in \cite{Fetcu-Oniciuc-Survey, HT, Chen-Ou}). Then, the next step to be taken has been to study the biharmonic submanifolds in spaces with non-constant sectional curvature, and a very good environment in this respect proved to be complex projective spaces (see, for example, \cite{BCFO, FLMO, I-I-U, S1, S2, Sasahara-2019}).

From the theory of biharmonic submanifolds, a new direction developed in the last decade with the studies on biconservative submanifolds. These submanifolds are defined only by the vanishing of the tangent part of the bitension field. Some examples of articles on this topic are \cite{C-M-O-P, LMO, Manfio-Turgay,  M.O.R., Nistor1, S3, T.U.}.

Although the Segre embedding \cite{Segre} is a notion specific (and an important one) to Algebraic Geometry, it was also often used in studies where the point of view of Differential Geometry prevails (see, for example,  \cite{Chen1, NT, TT} and also \cite{Chen3} for a detailed report on the main results obtained in this approach between mid-$1970$'s and $2002$). However, the Segre embedding, an isometric immersion with a peculiar second fundamental form, was never employed in studying biharmonicity until very recently. In \cite{BCFO}, a paper from $2021$, it is proved that a product $\gamma\times\mathbb{C}P^q(4)$, where $\gamma$ is a curve in $\mathbb{C}P^1(4)$, immersed via Segre embedding in $\mathbb{C}P^{1+2q}(4)$, is proper-biharmonic if and only if $\gamma$ is proper-biharmonic in $\mathbb{C}P^1(4)$. This result suggests that there may be more such examples to be found by using the Segre embedding. 

In our paper, we further exploit this concept in order to find some of these proper-biharmonic submanifolds in complex projective spaces. Thus, we consider the Segre embedding of $\mathbb{C}P^p(4)\times\mathbb{C}P^q(4)$ into $\mathbb{C}P^{p+q+pq}(4)$, and then, by using Lagrangian submanifolds in $\mathbb{C}P^p(4)$ and $\mathbb{C}P^q(4)$, we obtain two classes of proper-biharmonic product submanifolds in $\mathbb{C}P^{p+q+pq}(4)$.

\textbf{Conventions.}
Henceforth, the complex projective space $\mathbb{C}P(4)$ of complex dimension $n$ and constant holomorphic sectional curvature $4$ will be denoted simply by $\mathbb{C}P^{n}$. For curvature tensors we will use the following sign convention
$$
R(X,Y)Z=\nabla_{X}\nabla_{Y}Z-\nabla_{Y}\nabla_{X}Z-\nabla_{[X,Y]}Z.
$$
The Laplacian defined for sections in a Riemannian vector bundle $\pi:E\to M$, endowed with a linear connection $\nabla^E$, will be $\Delta=-\trace(\nabla^E)^2$.

\section{Preliminaries}

Biharmonic maps $\phi : M^{m} \to N^{n}$ between two Riemannian manifolds are critical points of the bienergy functional
$$
E_{2}:C^{\infty}(M,N)\to \mathbb{R}, \quad E_{2}(\phi)=\frac{1}{2}\int_{M} |\tau(\phi)|^{2} dv,
$$
where $\tau(\phi)= \trace  \nabla d \phi$ is the tension field of $\phi$. The Euler-Lagrange equation, also called the biharmonic equation in this case, was derived by G.-Y.~Jiang \cite{Jiang}
\begin{eqnarray}\label{tau-2}
\tau_{2}(\phi)=-\Delta \tau(\phi)-\trace R^N(d\phi(\cdot),\tau(\phi))d \phi (\cdot)=0,
\end{eqnarray}
where $\tau_{2}(\phi)$ is the bitension field of $\phi$.

Any harmonic map is biharmonic and, therefore, we are interested in studying proper-biharmonic maps, i.e., non-harmonic biharmonic maps.

Next, if we consider a fixed map $\phi$ and let the domain metric vary, one obtains a functional on the set $\mathcal{G}$ of
Riemannian metrics on $M$
$$
\mathcal{F}_{2}:\mathcal{G}\to \mathbb{R}, \quad \mathcal{F}_{2}(g)=E_{2}(\phi).
$$
Critical points of this functional are characterized by the vanishing of the stress-energy
tensor $S_{2}$ of the bienergy (see \cite{LMO}). This tensor was introduced
in \cite{Jiang87} as
\begin{eqnarray*}
S_{2}(X,Y)&=&\frac{1}{2}\vert \tau (\phi)\vert ^{2}\langle X,Y \rangle +\langle d \phi, \nabla \tau (\phi) \rangle \langle X, Y \rangle-\langle d\phi (X),\nabla_{Y} \tau (\phi)\rangle
\\
&\ & -\langle d\phi (Y),\nabla_{X} \tau (\phi)\rangle,
\end{eqnarray*}
and it satisfies
$$
\Div S_{2}=\langle \tau_{2}(\phi), d\phi\rangle.
$$

We note that, for isometric immersions, $(\Div S_{2})^{\sharp} =-\tau_{2}(\phi)^{\top}$, where $\tau_{2}(\phi)^{\top}$ is the
tangent part of the bitension field.

\begin{definition}
A submanifold $\phi:M^{m} \to N^{n}$ of a Riemannian manifold $N^{n}$ is called biconservative if $\Div S_{2}=0$.
\end{definition}

As it is easy to see from this definition, a submanifold is biconservative if and only if the tangent part of its bitension field vanishes.

Next, let us recall some basic results in the theory of submanifolds. For a submanifold $M$ in a Riemannian manifold $N$ and any vector fields $X$ and $Y$ tangent to $M$ we have the Gauss Equation
$$
\nabla^N_XY=\nabla_XY+B(X,Y),
$$
where $\nabla$ is the induced connection on $M$ and $B$ is the second fundamental form of the immersion, and also the Weingarten Equation
$$
\nabla^N_XU=-A_UX+\nabla^{\perp}_XU
$$
where $U$ is a normal vector field, $A$ denotes the shape operator of $M$ in $N$ and $\nabla^{\perp}$ is the connection in the normal bundle.

Throughout our paper, we will also use the Gauss Equation of $M$ in $N$
\begin{equation}\label{Gauss2}
\langle R^N(X,Y)Z,W\rangle = \langle R(X, Y)Z,W\rangle +\langle B(X,Z),B(Y,W)\rangle-\langle B(X,W),B(Y,Z)\rangle,
\end{equation}
where $X$, $Y$, $Z$ and $W$ are vector fields tangent to $M$, as well as its Codazzi Equation
\begin{eqnarray}\label{Codazzi}
(\nabla_{X}^{\perp}B)(Y,Z)-(\nabla_{Y}^{\perp}B)(X,Z)=(R^N(X,Y)Z)^{\perp},
\end{eqnarray}
where
$$
(\nabla_{X}^{\perp}B)(Y,Z)=\nabla^{\perp}_{X}B(Y,Z)-B(\nabla_{X}Y,Z)-B(Y,\nabla_{X}Z).
$$

\begin{definition}
Let $M^{m}$ be a submanifold of a Riemannian manifold  $N^{n}$. If the mean curvature vector field $H=(1/m)\trace B$ of $M$ is parallel in the normal bundle, i.e., $\nabla^{\perp} H=0$, then $M$ is called a PMC submanifold. If the mean curvature $|H|$ is constant then $M$ is called a CMC submanifold.
\end{definition}

Now, consider a complex projective space $\mathbb{C}P^n$ with complex structure $J$. The curvature tensor field of $\mathbb{C}P^n$ is given by
\begin{eqnarray}\label{eq:curv}
R^{\mathbb{C}P^n}(X,Y)Z &=&\langle Y,Z\rangle X-\langle X,Z\rangle Y +\langle JY,Z\rangle JX-\langle JX,Z\rangle JY
\\\nonumber && +2\langle JY,X\rangle JZ,
\end{eqnarray}
where $X$, $Y$ and $Z$ are vector fields tangent to $\mathbb{C}P^n$.

\begin{definition}
A submanifold $M^{m}$ of $\mathbb{C}P^n$  with complex structure $J$ is said to be totally real if $JTM^{m}$ lies in the normal bundle of $M^{m}$. If, moreover, the real dimension of $M$ is equal to $n$ then $JTM^n=NM^n$ and $M^n$ is called a Lagrangian submanifold of $\mathbb{C}P^n$.
\end{definition}

We will also need the following result, which characterizes biharmonic and, implicitly, biconservative submanifolds.

\begin{theorem}[\cite{FLMO}]\label{split}
Let $M^m$ be a submanifold of $\mathbb{C}P^n$ such that $JH$ is tangent to $M$. Then
$M$ is biharmonic if and only if
\begin{equation*}
\begin{cases}
-\Delta^{\perp}H-\trace B(\cdot,A_{H}(\cdot))+(m+3)H=0\\
4\trace A_{\nabla^{\perp}_{(\cdot)}H}(\cdot)+m\grad
(|H|^2)=0,
\end{cases}
\end{equation*}
where $\Delta^\perp$ is the Laplacian in the normal bundle of $M$ in $\mathbb{C}P^n$.
\end{theorem}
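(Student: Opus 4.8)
The plan is to start from Jiang's biharmonic equation \eqref{tau-2} applied to the isometric immersion $\phi$. Since $\tau(\phi)=mH$, $M$ is biharmonic if and only if
$$\Delta^{\phi}(mH)+m\trace R^{\mathbb{C}P^n}(\cdot,H)\cdot=0,$$
where $\Delta^{\phi}$ denotes the rough Laplacian on the pulled-back bundle $\phi^{-1}T\mathbb{C}P^n$. I would then invoke the standard tangent/normal splitting of this equation, obtained by applying the Gauss and Weingarten formulas twice to $\Delta^{\phi}H$ and using the Codazzi equation \eqref{Codazzi} to turn $\trace(\nabla_{(\cdot)}A_{H})(\cdot)$ into a gradient term: the part of the bitension field normal to $M$ equals
$$-m\Big(\Delta^{\perp}H+\trace B(\cdot,A_{H}(\cdot))+\big(\trace R^{\mathbb{C}P^n}(\cdot,H)\cdot\big)^{\perp}\Big),$$
and the part tangent to $M$ equals
$$-m\Big(2\trace A_{\nabla^{\perp}_{(\cdot)}H}(\cdot)+\frac{m}{2}\grad(|H|^2)+2\big(\trace R^{\mathbb{C}P^n}(\cdot,H)\cdot\big)^{\top}\Big).$$
This reduction uses nothing about $\mathbb{C}P^n$; all that is left is to evaluate $\trace R^{\mathbb{C}P^n}(\cdot,H)\cdot$ under the hypothesis $JH\in TM$.

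To do so I would fix a local orthonormal frame $\{e_i\}_{i=1}^{m}$ tangent to $M$, set $X=Z=e_i$, $Y=H$ in \eqref{eq:curv}, and use that $H$ is normal (so $\langle H,e_i\rangle=0$) and that $J$ is skew-symmetric (so $\langle Je_i,e_i\rangle=0$). This yields
$$R^{\mathbb{C}P^n}(e_i,H)e_i=-H+3\langle JH,e_i\rangle Je_i,$$
hence $\trace R^{\mathbb{C}P^n}(\cdot,H)\cdot=-mH+3\sum_i\langle JH,e_i\rangle Je_i$. Here the hypothesis enters decisively: because $JH$ is tangent to $M$ it coincides with its own tangential projection, $\sum_i\langle JH,e_i\rangle e_i=JH$, so $\sum_i\langle JH,e_i\rangle Je_i=J(JH)=-H$, and therefore $\trace R^{\mathbb{C}P^n}(\cdot,H)\cdot=-(m+3)H$. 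In particular this vector is normal to $M$, so $\big(\trace R^{\mathbb{C}P^n}(\cdot,H)\cdot\big)^{\top}=0$ and $\big(\trace R^{\mathbb{C}P^n}(\cdot,H)\cdot\big)^{\perp}=-(m+3)H$.

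Substituting into the two components above and dividing by $-m$, biharmonicity of $M$ becomes equivalent to $-\Delta^{\perp}H-\trace B(\cdot,A_{H}(\cdot))+(m+3)H=0$ together with $2\trace A_{\nabla^{\perp}_{(\cdot)}H}(\cdot)+\frac{m}{2}\grad(|H|^2)=0$, and multiplying the latter by $2$ gives precisely the system in the statement. I expect the only laborious step to be the general tangent/normal decomposition of $\Delta^{\phi}(mH)$ --- one has to keep careful track of the cross terms produced by the second application of the Gauss and Weingarten formulas and then call on Codazzi to produce the $\grad(|H|^2)$ term --- but this is a classical computation valid in an arbitrary ambient space. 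The content genuinely specific to this theorem is the one-line curvature computation, and it is exactly the assumption $JH\in TM$ that makes $\trace R^{\mathbb{C}P^n}(\cdot,H)\cdot$ purely normal with the clean value $-(m+3)H$; without it a nonzero tangential curvature term would survive in the second equation.
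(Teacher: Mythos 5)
Your argument is correct and is essentially the standard one: the paper states this theorem without proof (citing \cite{FLMO}), and the proof there proceeds exactly as you do, via the general tangent/normal splitting of the bitension field of an isometric immersion followed by the computation $\trace R^{\mathbb{C}P^n}(\cdot,H)\cdot=-mH+3J\bigl((JH)^{\top}\bigr)=-(m+3)H$ under the hypothesis $JH\in TM$. All your signs and coefficients (in particular the factor $2$ on $\bigl(\trace R^{N}(\cdot,H)\cdot\bigr)^{\top}$, one copy from Codazzi and one from the explicit curvature term in $\tau_2$) check out against the conventions fixed in the paper.
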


\begin{corollary}
Let $M^m$ be a submanifold of $\mathbb{C}P^n$ such that $JH$ is tangent to $M$. Then
$M$ is biconservative if and only if
$$
4\trace A_{\nabla^{\perp}_{(\cdot)}H}(\cdot)+m\grad(|H|^2)=0.
$$
\end{corollary}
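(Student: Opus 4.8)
The plan is to derive the corollary directly from Theorem~\ref{split} by observing that the biconservativity of $M$ is controlled entirely by the tangent part of the bitension field, which in the setting $JH$ tangent to $M$ is precisely what appears in the second equation of the system. More concretely, I would start from the general fact recorded in the Preliminaries, namely that for an isometric immersion $(\Div S_2)^\sharp = -\tau_2(\phi)^\top$, so that $M$ is biconservative if and only if $\tau_2(\phi)^\top = 0$. Hence it suffices to identify $\tau_2(\phi)^\top$ with (a nonzero multiple of) the left-hand side of the displayed equation.

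The key step is therefore a decomposition of the bitension field $\tau_2(\phi)$ of the immersion $\phi : M^m \to \mathbb{C}P^n$ into its tangent and normal components under the hypothesis that $JH$ is tangent to $M$. This is exactly the computation underlying Theorem~\ref{split}: one writes $\tau(\phi) = mH$, substitutes into \eqref{tau-2}, uses the Gauss and Weingarten equations together with the explicit curvature tensor \eqref{eq:curv} of $\mathbb{C}P^n$, and collects terms. The curvature term $\trace R^{\mathbb{C}P^n}(d\phi(\cdot), H)d\phi(\cdot)$ splits into a tangent contribution (involving $JH$, which by assumption is tangential, contributing the $(m+3)H$-type terms) and a normal contribution; similarly $\Delta^\perp H$ and $\trace B(\cdot, A_H(\cdot))$ and $\trace A_{\nabla^\perp_{(\cdot)}H}(\cdot)$ sort themselves into the two slots. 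The upshot, already encoded in Theorem~\ref{split}, is that the normal part of $\tau_2(\phi)$ vanishes iff the first equation holds, while
$$
-\tau_2(\phi)^\top \;=\; \frac{1}{m}\Bigl(4\trace A_{\nabla^{\perp}_{(\cdot)}H}(\cdot)+m\grad(|H|^2)\Bigr)^{\sharp}
$$
up to the sign and normalization conventions fixed in the paper. Once this identification is in hand, biconservativity ($\tau_2(\phi)^\top=0$) is equivalent to the vanishing of the second equation alone, which is the assertion.

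In practice, since Theorem~\ref{split} is already available, the cleanest route is purely formal: $M$ biconservative $\iff$ $\tau_2(\phi)^\top=0$ $\iff$ the tangent part of the biharmonic system \eqref{tau-2} vanishes, and comparing with the two-equation reformulation in Theorem~\ref{split} shows that this tangent part is governed solely by $4\trace A_{\nabla^{\perp}_{(\cdot)}H}(\cdot)+m\grad(|H|^2)$, because the first equation of that system is manifestly the normal part. I expect the only mild subtlety to be bookkeeping: one must be sure that the splitting of the curvature term in \eqref{tau-2} respects the tangent/normal decomposition exactly as claimed, which is where the hypothesis $JH \in TM$ is essential (it guarantees no cross-terms leak between the two equations). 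No genuine obstacle arises beyond this routine verification.
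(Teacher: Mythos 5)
Your argument is correct and is exactly the route the paper intends: the corollary is stated as an immediate consequence of Theorem~\ref{split} together with the remark in the Preliminaries that $(\Div S_2)^\sharp=-\tau_2(\phi)^\top$, so biconservativity is the vanishing of the tangent part of the bitension field, which the decomposition underlying Theorem~\ref{split} identifies (up to a nonzero constant) with $4\trace A_{\nabla^{\perp}_{(\cdot)}H}(\cdot)+m\grad(|H|^2)$. You correctly flag the one point that needs care — that one must appeal to the derivation of the split (where the first equation is the normal part and the second the tangent part), not merely to the statement of the equivalence — and this is precisely how the paper's cited source \cite{FLMO} sets it up.
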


We end this section with the definition and a basic property of the Segre embedding. Introduced by C.~Segre \cite{Segre} in $1891$, this isometric and holomorphic embedding is given by
$$
S_{pq}:\mathbb{C}P^p\times\mathbb{C}P^q\rightarrow\mathbb{C}P^{p+q+pq}
$$
with
$$
S_{pq}([(z_0,\cdot\cdot\cdot,z_p)],[(w_0,\cdot\cdot\cdot,w_q)])=\left[(z_jw_t)_{0\leq j\leq p,0\leq t\leq q}\right],
$$
where $(z_0,\cdot\cdot\cdot,z_p)$ and $(w_0,\cdot\cdot\cdot,w_q)$ are the homogeneous coordinates in $\mathbb{C}P^p$ and $\mathbb{C}P^q$, respectively. For the sake of simplicity, from now on, we will denote $S_{pq}=j$.

Let $B^j$ be the second fundamental form of the Segre embedding. Since $\mathbb{C}P^p$ and $\mathbb{C}P^q$ are totally geodesic in $\mathbb{C}P^{p+q+pq}$ and also in $\mathbb{C}P^p\times\mathbb{C}P^q$, we have the following property of $B^j$.

\begin{lemma}\label{Bjprop}
If vector fields $X_1$, $X_2$ are tangent to $\mathbb{C}P^p$ and $Y_1$, $Y_2$ to $\mathbb{C}P^q$, then $B^j(X_1,X_2)=B^j(Y_1,Y_2)=0$.
\end{lemma}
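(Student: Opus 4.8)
The plan is to obtain both identities from the composition formula for second fundamental forms, fed by the two total-geodesy properties recalled immediately before the statement. Write $i_1\colon\mathbb{C}P^p\to\mathbb{C}P^p\times\mathbb{C}P^q$ for the inclusion as a slice $\mathbb{C}P^p\times\{[(z_0,\dots,z_p)]\mapsto\text{fixed point}\}$, more precisely $\mathbb{C}P^p\times\{[(w_0,\dots,w_q)]\}$, and $i_2\colon\mathbb{C}P^q\to\mathbb{C}P^p\times\mathbb{C}P^q$ for the inclusion as a slice $\{[(z_0,\dots,z_p)]\}\times\mathbb{C}P^q$; by the product structure of the metric these are totally geodesic isometric immersions. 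On the other hand, fixing the homogeneous coordinates of the $\mathbb{C}P^q$-point, the composition $j\circ i_1$ sends $[(z_0,\dots,z_p)]$ to $[(z_jw_t)_{j,t}]$, which is the projectivization of the linear injection $z\mapsto z\otimes w$ of $\mathbb{C}^{p+1}$ into $\mathbb{C}^{(p+1)(q+1)}$; hence its image is a linear projective subspace of $\mathbb{C}P^{p+q+pq}$, so $j\circ i_1$ is a totally geodesic embedding of $\mathbb{C}P^p$ into $\mathbb{C}P^{p+q+pq}$ — this is precisely the assertion that the first factor sits totally geodesically in the target. Likewise $j\circ i_2$ is totally geodesic.

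Next I would invoke the standard identity relating the second fundamental forms of maps $\phi\colon M\to N$, $\psi\colon N\to P$ and their composition $\psi\circ\phi$: for vector fields $X$, $Y$ tangent to $M$,
$$
B^{\psi\circ\phi}(X,Y)=d\psi(B^{\phi}(X,Y))+B^{\psi}(d\phi(X),d\phi(Y)).
$$
Applying this with $\phi=i_1$ and $\psi=j$, and using $B^{i_1}=0$ together with $B^{j\circ i_1}=0$, one gets $B^{j}(di_1(X),di_1(Y))=0$ for all $X$, $Y$ tangent to $\mathbb{C}P^p$; since $di_1$ identifies $T\mathbb{C}P^p$ with the first summand of $T(\mathbb{C}P^p\times\mathbb{C}P^q)$, this reads $B^j(X_1,X_2)=0$. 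Running the same computation with $\phi=i_2$, $\psi=j$ yields $B^j(Y_1,Y_2)=0$, which finishes the proof.

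The argument is essentially formal, so I do not anticipate a genuine obstacle; the only points worth a line of justification are that a slice inclusion into a Riemannian product is totally geodesic, and that the restriction of $S_{pq}$ to a slice is a linear embedding into $\mathbb{C}P^{p+q+pq}$ (hence totally geodesic, since linear projective subspaces of a complex projective space are totally geodesic). Both follow at once from the explicit form of the Segre map and the product metric, after which the composition formula does the rest — note in particular that nothing needs to be known about $B^j$ on mixed pairs $(X_1,Y_1)$, which is exactly where all the nontrivial information of the Segre embedding is concentrated.
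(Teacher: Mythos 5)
Your argument is correct and is exactly the route the paper takes: the lemma is stated there as an immediate consequence of the fact that $\mathbb{C}P^p$ and $\mathbb{C}P^q$ are totally geodesic both in the product and in $\mathbb{C}P^{p+q+pq}$, which is precisely what you establish (via the linearity of the Segre map on slices) before applying the composition formula for second fundamental forms. Your write-up simply supplies the details the paper leaves implicit.
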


\section{Two classes of biharmonic submanifolds}

\subsection{Submanifolds of type $M^p\times\mathbb{C}P^q$}

Let $M^p$ be a Lagrangian submanifold in $\mathbb{C}P^p$ with mean curvature vector field $H$ and denote by $\nabla$, $B$, $A$, and $\nabla^\perp$ the data of this immersion. Consider two more immersions 
$$
i:\Sigma^{p+2q}=M^p\times\mathbb{C}P^q\to\prodd
$$
and
$$
\phi=j\circ i:\Sigma^{p+2q}\to\mathbb{C}P^{p+q+pq},
$$
where $j:\prodd\to\mathbb{C}P^{p+q+pq}$ is the Segre embedding.

From the Gauss equations of $\phi$ and $j$, one obtains
\begin{eqnarray}\label{BBB}
\Bphi(X,Y)&=&\nabla^{\final}_XY-\nabla^{\Sigma}_XY=\nabla^{\prod}_XY-\nabla^{\Sigma}_XY+B^j(X,Y)\\\nonumber&=&B^i(X,Y)+B^j(X,Y),
\end{eqnarray}
for any vector fields $X$ and $Y$ tangent to $\Sigma$.

Let $\{E_a\}_{a=1}^p$ be a local orthonormal frame field on $M^p$ and $\{\bar E_{\alpha}\}_{\alpha=1}^{2q}$ be a local orthonormal frame field on $\mathbb{C}P^q$. 

A very important feature of the immersion $\phi$ is that $\{B^{\phi}(E_a,\bar E_{\alpha})\}$ are orthonormal vector fields (see \cite{Chen1}). This leads, also using $B^i(E_a,\bar E_{\alpha})=0$ and $B^j(JE_a,\bar E_{\alpha}) =JB^j(E_a,\bar E_{\alpha})$, to the following lemma which will be a key ingredient of our computations.

\begin{lemma}\label{Bj} The second fundamental form $B^j$ of the Segre embedding has the following properties:

\begin{enumerate}

\item $\left\{B^{j}(E_a,\bar E_{\alpha})\right\}$ are orthonormal vector fields;

\item $\left\{B^{j}(JE_a,\bar E_{\alpha})\right\}$ are orthonormal vector fields.
\end{enumerate}
\end{lemma}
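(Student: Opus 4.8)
The starting point is the quoted fact from \cite{Chen1} that the vectors $\{B^{\phi}(E_a,\bar E_{\alpha})\}_{a,\alpha}$ are orthonormal. Combining the decomposition \eqref{BBB}, which gives $B^{\phi}(E_a,\bar E_{\alpha}) = B^i(E_a,\bar E_{\alpha}) + B^j(E_a,\bar E_{\alpha})$, with the vanishing $B^i(E_a,\bar E_{\alpha}) = 0$ (this is because $M^p$ is totally geodesic \emph{as a factor} in the product $\prodd$, so the second fundamental form $B^i$ of $\Sigma = M^p\times\mathbb{C}P^q$ in $\prodd$ only sees the second fundamental form of $M^p$ in $\mathbb{C}P^p$, and there is no mixed term), we immediately get $B^{\phi}(E_a,\bar E_{\alpha}) = B^j(E_a,\bar E_{\alpha})$. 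Hence assertion (1) is just a restatement of Chen's orthonormality for these particular vectors.

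For assertion (2) I would exploit the holomorphicity of the Segre embedding, which is encoded in the relation $B^j(JX,Y) = J\,B^j(X,Y)$ for $X,Y$ tangent to the product (more precisely, $B^j$ is of type $(1,1)$ and commutes with $J$ in each slot, a standard property of a holomorphic isometric immersion between Kähler manifolds, whose normal bundle is then $J$-invariant). Since $E_a$ is tangent to $\mathbb{C}P^p$, so is $JE_a$; thus by Lemma~\ref{Bjprop} the vectors $B^j(JE_a,\bar E_{\alpha})$ are again of the "mixed" type to which part (1) applies — but more directly, I would simply write $B^j(JE_a,\bar E_{\alpha}) = J\,B^j(E_a,\bar E_{\alpha})$ and observe that $J$ is a linear isometry of the normal bundle of $\phi$ in $\final$. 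Applying an isometry to an orthonormal family yields an orthonormal family, so $\{B^j(JE_a,\bar E_{\alpha})\}$ is orthonormal. One small point to check is that $J$ indeed preserves the normal space of $\Sigma$ in $\final$ at the relevant points, i.e., that $J\,B^j(E_a,\bar E_{\alpha})$ is again normal to $\Sigma$; this follows because the Segre image is a complex (hence $J$-invariant) submanifold of $\final$, so its normal bundle is $J$-invariant, and $B^j(E_a,\bar E_{\alpha})$ already lies in that normal bundle.

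The only genuinely delicate ingredient — and the one I would treat most carefully — is the justification of the three structural facts being imported: (i) $B^i(E_a,\bar E_{\alpha})=0$, (ii) the $J$-commutation $B^j(JX,Y)=JB^j(X,Y)$, and (iii) Chen's orthonormality of $\{B^{\phi}(E_a,\bar E_{\alpha})\}$. Facts (i) and (ii) are essentially formal consequences of "$M^p$ is totally geodesic in $\prodd$ as a factor" and "$j$ is a holomorphic isometric immersion between Kähler manifolds", respectively, and I would state them with a one-line justification and a pointer to Lemma~\ref{Bjprop} and to the Kähler structure. Fact (iii) is the substantive external input; since the excerpt already cites \cite{Chen1} for it, I would simply invoke it. With these in hand the proof of Lemma~\ref{Bj} is a two-line argument: part (1) by the decomposition \eqref{BBB} plus (i) plus (iii), and part (2) by applying the normal-bundle isometry $J$ to the orthonormal family from part (1). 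I expect no computational obstacle; the care is entirely in cleanly quoting the structural properties of the Segre embedding and of products of Kähler manifolds.
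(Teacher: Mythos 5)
Your proposal is correct and follows essentially the same route as the paper, which derives the lemma from exactly the three ingredients you isolate: Chen's orthonormality of $\{B^{\phi}(E_a,\bar E_{\alpha})\}$, the vanishing $B^i(E_a,\bar E_{\alpha})=0$, and the holomorphicity relation $B^j(JE_a,\bar E_{\alpha})=JB^j(E_a,\bar E_{\alpha})$ combined with the $J$-invariance of the normal bundle of the (complex) Segre image. Your write-up is in fact slightly more explicit than the paper's, which simply states these facts in the sentence preceding the lemma.
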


The mean curvature vector field of $\Sigma$ in $\mathbb{C}P^{p+q+pq}$ is given by
$$
H^\phi=H^i+\frac{1}{p+2q} \left(\sum_{a=1}^p B^j(E_a,E_a)+\sum_{\alpha=1}^{2q} B^j(\bar E_{\alpha},\bar E_{\alpha})\right).
$$
Since $B^j(E_a,E_a)=B^j(\bar E_{\alpha},\bar E_{\alpha})=0$, we have $H^{\phi}=H^i$. Moreover, it is easy to see that $B^i(E_a,E_a)=B(E_a,E_a)$ and $B^i(\bar E_{\alpha},\bar E_{\alpha})=0$, and, therefore, 
\begin{equation}{\label{H}}
H^{\phi}=\frac{p}{p+2q}H.
\end{equation}

For each vector field $E_a$ we have $B^j(E_a,H)=0$ and it follows
\begin{eqnarray*}
\nabla^{\final}_{E_a}H^{\phi}&=&\frac{p}{p+2q}\left(\nabla^{\prod}_{E_a}H+B^j(E_a,H)\right)\\ 
&=&\frac{p}{p+2q}\nabla^{\mathbb{C}P^p}_{E_a}H.
\end{eqnarray*}
This leads to
\begin{equation}\label{nablaperp1}
\nabla_{E_a}^{\perp\phi}H^{\phi}=\frac{p}{p+2q}\nabla^{\perp}_{E_a}H\quad\quad\textnormal{and}\quad\quad
\Aphi_{H^\phi}E_a=\frac{p}{p+2q}A_HE_a.
\end{equation}
In the same way, we get
\begin{equation}\label{nablaperp2}
\nabla_{\bar E_{\alpha}}^{\perp\phi}H^{\phi}=\frac{p}{p+2q}B^j(\bar E_{\alpha},H)\quad\quad\textnormal{and}\quad\quad
\Aphi_{H^\phi}\bar E_{\alpha}=0,
\end{equation}
for any vector field $\bar E_{\alpha}$.

\begin{remark}\label{rem:2} Since $M^p$ is a Lagrangian submanifold in $\mathbb{C}P^p$, its mean curvature vector field can be written as $H=|H|JX$ for some unit vector field $X$ tangent to $M$. From Lemma \ref{Bj} one can easily see that $|B^j(\bar E_{\alpha},H)|=|H||B^j(\bar E_{\alpha},JX)|=|H|$. Thus, the first identity of \eqref{nablaperp2} implies that $\Sigma$ cannot be a PMC submanifold. On the other hand, it is obvious that $\Sigma$ is a CMC submanifold in $\mathbb{C}P^{p+q+pq}$ if and only if $M$ is a CMC submanifold of $\mathbb{C}P^p$.
\end{remark}

\begin{theorem}\label{thm:bi1} If $M^p$ is a Lagrangian submanifold in $\mathbb{C}P^p$, then 

\begin{enumerate}

\item via the Segre embedding of $\mathbb{C}P^{p}\times \mathbb{C}P^{q}$ into
$\mathbb{C}P^{p+q+pq}$, the product $\Sigma^{p+2q}=M^p\times \mathbb{C}P^q$ is a biconservative submanifold of
$\mathbb{C}P^{p+q+pq}$ if and only if $M^p$ is a biconservative submanifold in $\mathbb{C}P^p$;

\item $\Sigma^{p+2q}$ is a proper-biharmonic submanifold in $\mathbb{C}P^{p+q+pq}$ if and only if $M^p$ is a proper-biharmonic submanifold in $\mathbb{C}P^p$.
\end{enumerate}
\end{theorem}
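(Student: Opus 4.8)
The plan is to apply Theorem \ref{split} simultaneously to $\phi\colon\Sigma^{p+2q}\to\mathbb{C}P^{p+q+pq}$ and to $M^p\hookrightarrow\mathbb{C}P^p$ and to check that the two equations of the biharmonic system for $\Sigma$ collapse, term by term, to those for $M$. The hypothesis of Theorem \ref{split} holds on both sides: $JH$ is tangent to $M$ because $M$ is Lagrangian (indeed $H=|H|JX$ for a unit $X$ tangent to $M$, as in Remark \ref{rem:2}), and $JH^\phi$ is tangent to $\Sigma$ because the Segre embedding is holomorphic; the relevant dimensions are $m=p+2q$ for $\Sigma$ and $m=p$ for $M$. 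Every trace over $\Sigma$ will be split along $\{E_a\}_{a=1}^{p}\cup\{\bar E_\alpha\}_{\alpha=1}^{2q}$, and we will repeatedly use \eqref{H}, \eqref{nablaperp1}, \eqref{nablaperp2}, Lemmas \ref{Bjprop} and \ref{Bj}, and the holomorphy identities $B^j(JX,Y)=B^j(X,JY)=JB^j(X,Y)$. A preliminary observation is also needed: if $W$ is normal to $M$ in $\mathbb{C}P^p$, viewed as normal to $\Sigma$ in $\mathbb{C}P^{p+q+pq}$, then $B^j(E_a,W)=0$ by Lemma \ref{Bjprop}, so $\nabla^{\mathbb{C}P^{p+q+pq}}_{E_a}W=\nabla^{\mathbb{C}P^p}_{E_a}W$; comparing the Gauss--Weingarten decompositions for $\phi$ and for $M\hookrightarrow\mathbb{C}P^p$ then gives $A^\phi_W E_a=A_W E_a$ and $\nabla^{\perp\phi}_{E_a}W=\nabla^{\perp}_{E_a}W$, and more generally the normal bundle of $\phi$ decomposes orthogonally as $NM\oplus N^j$, where $N^j$ is the normal bundle of the Segre embedding.

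For part (1) one computes $\trace A^\phi_{\nabla^{\perp\phi}_{(\cdot)}H^\phi}(\cdot)$: by the preliminary observation and \eqref{nablaperp1} its $E_a$--part is $\frac{p}{p+2q}\trace A_{\nabla^{\perp}_{(\cdot)}H}(\cdot)$, while by \eqref{nablaperp2} its $\bar E_\alpha$--part is a field tangent to $M$ whose $E_b$--component equals $\frac{p}{p+2q}\sum_\alpha\langle B^j(\bar E_\alpha,H),B^j(\bar E_\alpha,E_b)\rangle$; writing $H=|H|JX$ with $X=\sum_c x_cE_c$ and using $B^j(JE_c,\bar E_\alpha)=B^j(E_c,J\bar E_\alpha)$, Lemma \ref{Bj}(1), and $\trace\bigl(J|_{T\mathbb{C}P^q}\bigr)=0$, this component vanishes. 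Since $|H^\phi|^2=\bigl(\frac{p}{p+2q}\bigr)^2|H|^2$ is pulled back from $M$ and $\Sigma$ carries a product metric, $\grad(|H^\phi|^2)=\bigl(\frac{p}{p+2q}\bigr)^2\grad(|H|^2)$. Hence the second equation of Theorem \ref{split} for $\Sigma$ is $\frac{p}{p+2q}\bigl(4\trace A_{\nabla^{\perp}_{(\cdot)}H}(\cdot)+p\,\grad(|H|^2)\bigr)=0$, equivalent (as $p\neq 0$) to the biconservativity of $M$.

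For part (2) it remains to reduce the first equation of Theorem \ref{split}. Since $A^\phi_{H^\phi}\bar E_\alpha=0$ by \eqref{nablaperp2} and $B^j$ kills pairs tangent to $\mathbb{C}P^p$, one gets $\trace B^\phi(\cdot,A^\phi_{H^\phi}(\cdot))=\frac{p}{p+2q}\trace B(\cdot,A_H(\cdot))$. Splitting $\Delta^{\perp\phi}H^\phi=-\sum_a(\nabla^{\perp\phi})^2_{E_a,E_a}H^\phi-\sum_\alpha(\nabla^{\perp\phi})^2_{\bar E_\alpha,\bar E_\alpha}H^\phi$, the $E_a$--part is $-\frac{p}{p+2q}\Delta^{\perp}H$ by the preliminary observation and \eqref{nablaperp1}, and by \eqref{nablaperp2} the $\bar E_\alpha$--part is $-\frac{p}{p+2q}\mathcal T$ with $\mathcal T=\sum_\alpha\bigl(\nabla^{\perp\phi}_{\bar E_\alpha}B^j(\bar E_\alpha,H)-B^j(\nabla^{\mathbb{C}P^q}_{\bar E_\alpha}\bar E_\alpha,H)\bigr)$. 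The decisive step is $\mathcal T=-2qH$: writing, via the splitting $NM\oplus N^j$, $\nabla^{\perp\phi}_{\bar E_\alpha}B^j(\bar E_\alpha,H)=\nabla^{\perp j}_{\bar E_\alpha}B^j(\bar E_\alpha,H)-\bigl(A^j_{B^j(\bar E_\alpha,H)}\bar E_\alpha\bigr)^{NM}$ (with $\nabla^{\perp j}$, $A^j$ the normal connection and shape operator of the Segre embedding), the Codazzi equation of the Segre embedding applied to $(\bar E_\alpha,H,\bar E_\alpha)$ gives $\nabla^{\perp j}_{\bar E_\alpha}B^j(\bar E_\alpha,H)-B^j(\nabla^{\mathbb{C}P^q}_{\bar E_\alpha}\bar E_\alpha,H)=0$, because $R^{\mathbb{C}P^{p+q+pq}}(\bar E_\alpha,H)\bar E_\alpha$ is tangent to the Segre image (a direct check from \eqref{eq:curv}, since $\bar E_\alpha$, $H$, $J\bar E_\alpha$, $JH$ are all tangent to it); and $\sum_\alpha\bigl(A^j_{B^j(\bar E_\alpha,H)}\bar E_\alpha\bigr)^{NM}=\sum_b\bigl(\sum_\alpha\langle B^j(\bar E_\alpha,JE_b),B^j(\bar E_\alpha,H)\rangle\bigr)JE_b=2q|H|JX=2qH$ by Lemma \ref{Bj}(2) and $H=|H|JX$. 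Thus $\Delta^{\perp\phi}H^\phi=\frac{p}{p+2q}\bigl(\Delta^{\perp}H+2qH\bigr)$, and the first equation of Theorem \ref{split} for $\Sigma$ becomes $\frac{p}{p+2q}\bigl(-\Delta^{\perp}H-\trace B(\cdot,A_H(\cdot))+(p+3)H\bigr)=0$, i.e., the first equation of Theorem \ref{split} for $M$. With part (1) this shows $\Sigma$ is biharmonic iff $M$ is, and since $H^\phi=\frac{p}{p+2q}H$, minimality is preserved, so properness is too.

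The main obstacle is the evaluation of $\mathcal T$: isolating the orthogonal splitting $NM\oplus N^j$ of the normal bundle of $\phi$, correctly relating $\nabla^{\perp\phi}$ to the Segre normal connection $\nabla^{\perp j}$ on sections of the form $B^j(\bar E_\alpha,H)$, and extracting the cancellation from the Codazzi equation of the Segre embedding together with the precise form \eqref{eq:curv} of the curvature of $\mathbb{C}P^{p+q+pq}$. Everything else is linear bookkeeping with \eqref{H}--\eqref{nablaperp2} and Lemmas \ref{Bjprop}--\ref{Bj}.
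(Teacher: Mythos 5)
Your proof is correct and follows essentially the same route as the paper: apply Theorem \ref{split}, split every trace along the product frame $\{E_a\}\cup\{\bar E_\alpha\}$, and handle the $\bar E_\alpha$-directions via Lemmas \ref{Bjprop}--\ref{Bj} and the Codazzi equation of the Segre embedding, arriving at the same $-2qH$ correction that converts $(p+2q+3)H^\phi$ into $(p+3)H$. The only cosmetic difference is that you obtain $\langle B^j(\bar E_\alpha,H),B^j(\bar E_\alpha,E_b)\rangle=0$ directly from the orthonormality in Lemma \ref{Bj} and $\langle J\bar E_\alpha,\bar E_\alpha\rangle=0$, whereas the paper derives the same vanishing from the Gauss equation of $j$ combined with the curvature formula \eqref{eq:curv}.
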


\begin{proof} We note that the vector field $JH^\phi=(p/(p+2q))JH$ is tangent to $M$ and, therefore, to $\Sigma$, which means that we can apply Theorem \ref{split} to write the biconservative equation of the immersion $\phi:\Sigma\to\mathbb{C}P^{p+q+pq}$ as 
\begin{equation}\label{eq:bicons1}
4\trace\Aphi_{\nabla_{(\cdot)}^{\perp\phi} H^\phi}(\cdot)+(p+2q)\grad(|H^\phi|^2)=0.
\end{equation}

In order to compute the first term in the left-hand side of \eqref{eq:bicons1}, consider again the orthonormal frame fields $\{E_a\}_{a=1}^p$ on $M$ and $\{\bar E_{\alpha}\}_{\alpha=1}^{2q}$ on $\mathbb{C}P^q$ and then, from the first equation \eqref{nablaperp1} and Lemma \ref{Bjprop}, we have
\begin{eqnarray*}
\Aphi_{\nabla_{E_a}^{\perp\phi} H^\phi}E_a&=&\frac{p}{p+2q}\Aphi_{\nabla_{E_a}^{\perp} H}E_a\\ 
&=&\frac{p}{p+2q}\left(-\nabla_{E_a}^{\final}\nabla_{E_a}^\perp H+\nabla_{E_a}^{\perp\phi}\nabla_{E_a}^{\perp}H\right)\\ &=&\frac{p}{p+2q}\left(-\nabla_{E_a}^{\prod}\nabla_{E_a}^\perp H-B^j(E_a,\nabla_{E_a}^\perp H)+\nabla_{E_a}^{\perp\phi}\nabla_{E_a}^{\perp}H\right)\\ &=&\frac{p}{p+2q}\left(-\nabla_{E_a}^{\mathbb{C}P^p}\nabla_{E_a}^\perp H+\nabla_{E_a}^{\perp\phi}\nabla_{E_a}^{\perp}H\right),
\end{eqnarray*}
that is
$$
\langle \Aphi_{\nabla_{E_a}^{\perp\phi} H^\phi}E_a,\bar E_{\alpha}\rangle=0,\quad\forall\alpha\in\{1,...,2q\},
$$
and
$$
\langle \Aphi_{\nabla_{E_a}^{\perp\phi} H^\phi}E_a,E_b\rangle=\frac{p}{p+2q}\langle A_{\nabla_{E_a}^{\perp} H}E_a,E_b\rangle,\quad\forall b\in\{1,...,p\}.
$$
Hence
\begin{equation}\label{nablaperpbicons1}
\Aphi_{\nabla_{E_a}^{\perp\phi} H^\phi}E_a=\frac{p}{p+2q} A_{\nabla_{E_a}^{\perp} H}E_a,\quad\forall a\in\{1,...,p\}.
\end{equation}

Next, from \eqref{nablaperp2} and \eqref{BBB}, for any unit vector field $X$ tangent to $\Sigma$ and any $\bar E\alpha$, one obtains
\begin{eqnarray*}
\langle \Aphi_{\nabla_{\bar E_{\alpha}}^{\perp\phi} H^\phi}\bar E_{\alpha},X\rangle&=&\frac{p}{p+2q}\langle \Aphi_{\nabla_{\bar E_{\alpha}}^{\perp\phi} H^\phi}\bar E_{\alpha},X\rangle\\ &=&\frac{p}{p+2q}\langle B^{\phi}(\bar E_{\alpha},X), B^j(\bar E_{\alpha},H)\rangle\\ &=&\frac{p}{p+2q}\langle B^i(\bar E_{\alpha},X)+B^j(\bar E_{\alpha},X), B^j(\bar E_{\alpha},H)\rangle\\ &=&\frac{p}{p+2q}\langle B^j(\bar E_{\alpha},X), B^j(\bar E_{\alpha},H)\rangle.
\end{eqnarray*}
As $B^j(\bar E_{\alpha},\bar E_{\alpha}\rangle=0$, from the Gauss Equation \eqref{Gauss2} of $j$, we have
\begin{eqnarray*}
\langle B^j(\bar E_{\alpha},X), B^j(\bar E_{\alpha},H)\rangle&=&\langle B^j(\bar E_{\alpha},\bar E_{\alpha}), B^j(X,H)\rangle-\langle R^{\final}(\bar E_{\alpha},H)\bar E_{\alpha},X\rangle\\&&+\langle R^{\prod}(\bar E_{\alpha},H)\bar E_{\alpha},X\rangle\\&=&\langle R^{\prod}(\bar E_{\alpha},H)\bar E_{\alpha}- R^{\final}(\bar E_{\alpha},H)\bar E_{\alpha},X\rangle.
\end{eqnarray*}
Since $\bar E_{\alpha}$'s are tangent to $\mathbb{C}P^q$ and $H$ to $\mathbb{C}P^p$, we have, using the definition of the curvature tensor, $R^{\prod}(\bar E_{\alpha},H)\bar E_{\alpha}=0$. From \eqref{eq:curv}, one can also see that $R^{\final}(\bar E_{\alpha},H)\bar E_{\alpha}=-H$ and, therefore, $\langle B^j(\bar E_{\alpha},X), B^j(\bar E_{\alpha},H)\rangle=0$. Thus, one obtains
\begin{equation}\label{nablaperpbicons2}
\Aphi_{\nabla_{\bar E_{\alpha}}^{\perp\phi} H^\phi}\bar E_{\alpha}=0,\quad\forall\alpha\in\{1,...,2q\}.
\end{equation}

Finally, from equation \eqref{H}, we have
\begin{equation}\label{Hbicons}
\grad(|H^{\phi}|^2)=\frac{p^2}{(p+2q)^2}\grad(|H|^2).
\end{equation}

Replacing \eqref{nablaperpbicons1}, \eqref{nablaperpbicons2} and \eqref{Hbicons} in \eqref{eq:bicons1}, the biconservative equation of the immersion $\phi$, readily becomes
$$
4\trace A_{\nabla_{(\cdot)}^{\perp}H}(\cdot)+p\grad(|H|^2)=0,
$$
which, as $JH$ is tangent to $M$, is just the biconservative equation of $M$. 

To prove the second part of the theorem, we will also evaluate the normal part of the biharmonic equation of the immersion $\phi$ as given by Theorem \ref{split}
\begin{equation}\label{bihnormal}
-\Delta^{\perp\phi}H^{\phi}-\trace(B^{\phi}(\cdot,\Aphi_{H^{\phi}}\cdot))+(p+2q+3)H^{\phi}=0.
\end{equation}

With the same notations as before, from \eqref{nablaperp1} and \eqref{nablaperp2}, we have
\begin{eqnarray}\label{eq:7}
-\Delta^{\perp\phi}H^{\phi}&=&\sum_{a=1}^p\left(\nabla^{\perp\phi}_{E_a}\nabla^{\perp\phi}_{E_a}H^{\phi}-\nabla^{\perp\phi}_{\nabla^{\Sigma}_{E_a}E_a}H^{\phi}\right)\\\nonumber&&+\sum_{\alpha=1}^{2q}\left(\nabla^{\perp\phi}_{\bar E_{\alpha}}\nabla^{\perp\phi}_{\bar E_{\alpha}}H^{\phi}-\nabla^{\perp\phi}_{\nabla^{\Sigma}_{\bar E_{\alpha}}\bar E_{\alpha}}H^{\phi}\right)\\\nonumber&=&\frac{p}{p+2q}\sum_{a=1}^p\left(\nabla^{\perp\phi}_{E_a}\nabla^{\perp}_{E_a}H-\nabla^{\perp}_{\nabla_{E_a}E_a}H\right)\\\nonumber&&+\frac{p}{p+2q}\sum_{\alpha=1}^{2q}\left(\nabla^{\perp\phi}_{\bar E_{\alpha}}B^j(\bar E_{\alpha},H)-\nabla^{\perp\phi}_{\nabla^{\mathbb{C}P^q}_{\bar E_{\alpha}}\bar E_{\alpha}}H\right).
\end{eqnarray}

Now, let $X$ be a vector field tangent to $M$ and $V\in C(T\mathbb{C}P^p)$ a normal one. Then
\begin{eqnarray*}
\nabla_X^{\final}V&=&-\Aphi_VX+\nabla^{\perp\phi}_XV\\&=&\nabla_X^{\prod}V+B^j(X,V)=\nabla_X^{\mathbb{C}P^p}V\\&=&-A_VX+\nabla^{\perp}_XV.
\end{eqnarray*}
Since $V$ is tangent to $\mathbb{C}P^p$, we have $\langle \Aphi_VX,Y\rangle=\langle V,B^j(X,Y)\rangle=0$, for
any vector field $Y\in C(T\mathbb{C}P^q)$, which means that $\Aphi_VX$ is tangent to $M$ and the above equality implies
\begin{equation}\label{eq:AV}
\Aphi_VX=A_VX.
\end{equation}

By using equation \eqref{eq:AV} and Lemma \ref{Bjprop}, we get
\begin{eqnarray}\label{eq:8}
\nabla^{\perp\phi}_{E_a}\nabla_{E_a}^{\perp}H&=&\Aphi_{\nabla^{\perp}_{E_a}H}E_a+\nabla^{\final}_{E_a}\nabla_{E_a}^{\perp}H\\\nonumber&=&A_{\nabla^{\perp}_{E_a}H}E_a+\nabla^{\prod}_{E_a}\nabla_{E_a}^{\perp}H+B^j(E_a,\nabla_{E_a}^{\perp}H)\\\nonumber&=&A_{\nabla^{\perp}_{E_a}H}E_a+\nabla^{\mathbb{C}P^p}_{E_a}\nabla_{E_a}^{\perp}H\\\nonumber&=&\nabla_{E_a}^{\perp}\nabla_{E_a}^{\perp}H.
\end{eqnarray}

Next, we have
\begin{equation}\label{eq:9}
\nabla^{\perp\phi}_{\bar E_{\alpha}}B^j(\bar E_{\alpha},H)=\Aphi_{B^j(\bar E_{\alpha},H)}\bar E_{\alpha}+\nabla^{\final}_{\bar E_{\alpha}}B^j(\bar E_{\alpha},H).
\end{equation}
We know that $B^j(\bar E_{\alpha},\bar E_{\alpha})=0$. Then, a simple computation, using \eqref{eq:curv} and the fact that $JH$ is tangent to $M$, gives $R^{\final}(\bar E_{\alpha},H)\bar E_{\alpha}=-H$. All these, together with the Codazzi Equation \eqref{Codazzi} of the Segre embedding $j$, show that $(\nabla^{\perp j}_{\bar E_{\alpha}}B^j)(\bar E_{\alpha},H)=0$ and then $\nabla^{\perp j}_{\bar E_{\alpha}}B^j(\bar E_{\alpha},H)=B^j(\nabla^{\mathbb{C}P^q}_{\bar E_{\alpha}}\bar E_{\alpha},H)$. 

Therefore, one obtains
$$
\nabla^{\final}_{\bar E_{\alpha}}B^j(\bar E_{\alpha},H)=-A^j_{B^j(\bar E_{\alpha},H)}\bar E_{\alpha}+B^j(\nabla^{\mathbb{C}P^q}_{\bar E_{\alpha}}\bar E_{\alpha},H)
$$
and, replacing in \eqref{eq:9}, 
$$
\nabla^{\perp\phi}_{\bar E_{\alpha}}B^j(\bar E_{\alpha},H)=B^j(\nabla^{\mathbb{C}P^q}_{\bar E_{\alpha}}\bar E_{\alpha},H)-(A^j_{B^j(\bar E_{\alpha},H)}\bar E_{\alpha})^{\perp\phi}.
$$

From Lemma \ref{Bj}, since $H=|H|JX$ for some unit vector field $X$ tangent to $M$, it readily follows that $(A^j_{B^j(\bar E_{\alpha},H)}\bar E_{\alpha})^{\perp\phi}=H$ and then
\begin{equation}\label{eq:14}
\nabla^{\perp\phi}_{\bar E_{\alpha}}B^j(\bar E_{\alpha},H)=B^j(\nabla^{\mathbb{C}P^q}_{\bar E_{\alpha}}\bar E_{\alpha},H)-H.
\end{equation}

The second term in the left-hand side of equation \eqref{eq:7} can be written, by using \eqref{BBB}, \eqref{nablaperp1} and \eqref{nablaperp2}, as
\begin{eqnarray}\label{eq:15}
\nonumber\trace B^{\phi}(\cdot,\Aphi_{H^{\phi}}\cdot)&=&\sum_{a=1}^p B^{\phi}(E_a,\Aphi_{H^{\phi}}E_a)\\&=&\frac{p}{p+2q}\sum_{a=1}^p\left(B^i(E_a,A_HE_a)+B^j(E_a,A_HE_a)\right)\\\nonumber&=&\frac{p}{p+2q}\sum_{a=1}^p B^i(E_a,A_HE_a)=\frac{p}{p+2q}\sum_{a=1}^p B(E_a,A_HE_a)\\\nonumber&=&\frac{p}{p+2q}\trace B(\cdot,A_H\cdot).
\end{eqnarray}

We conclude by replacing \eqref{eq:8}, \eqref{eq:14} and \eqref{eq:15} into \eqref{bihnormal} and using \eqref{nablaperp2} to prove that the normal part of the biharmonic equation of $\phi$ is equivalent to
$$
-\Delta^{\perp}H-\trace B(\cdot,A_H\cdot)+(p+3)H=0,
$$
which is the normal part of the biharmonic equation of $M$ in $\mathbb{C}P^p$.
\end{proof}

\subsection{Submanifolds of type $M_1^p\times M_2^q$} Let us consider two Lagrangian submanifolds $M_1^p$ in $\mathbb{C}P^p$ and $M_2^q$ in $\mathbb{C}P^q$. We shall denote by $\nabla$, $B$, $A$, $\nabla^\perp$ and $\bar\nabla$, $\bar B$, $\bar A$, $\bar\nabla^\perp$, respectively, the data of these two immersions. Also, denote by $H_1$ the mean curvature vector field of $M_1$ in $\mathbb{C}P^p$ and by $H_2$ the mean curvature vector field of $M_2$ in $\mathbb{C}P^q$.

Next, consider the direct product $\Sigma^{p+q}=M_1^p\times M_2^q$ immersed, first in $\prodd$, and denote this immersion by $i:\Sigma^{p+q}\to\prodd$, and then in $\mathbb{C}P^{p+q+pq}$, via the Segre embedding $j:\prodd\to\mathbb{C}P^{p+q+pq}$, and denote by $\psi:j\circ i:\Sigma^{p+q}\to\mathbb{C}P^{p+q+pq}$ this second immersion.

In what follows we will study the biharmonicity of $\Sigma^{p+q}$ as a submanifold of $\mathbb{C}P^{p+q+pq}$.

Let $\{E_a\}_{a=1}^p$ be a local orthonormal frame field on $M_1$ and $\{\bar E_{\alpha}\}_{\alpha=1}^{q}$ be a local orthonormal frame field on $M_2$. 

First, since $M_1$ and $M_2$ are Lagrangian submanifolds, we can rewrite Lemma \ref{Bj} in a way adapted to our case as follows.

\begin{lemma}\label{Bj2} The second fundamental form $B^j$ of the Segre embedding has the following properties:

\begin{enumerate}

\item $\left\{B^{j}(E_a,\bar E_{\alpha}),B^{j}(E_b,J\bar E_{\beta})\right\}$ are orthonormal vector fields;

\item $\left\{B^{j}(JE_a,\bar E_{\alpha}),B^{j}(JE_b,J\bar E_{\beta})\right\}$ are orthonormal vector fields;

\end{enumerate}
\end{lemma}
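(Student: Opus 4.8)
The plan is to derive Lemma \ref{Bj2} from Lemma \ref{Bj} by observing that the only new phenomenon in the $M_1^p\times M_2^q$ setting is that the second factor $M_2^q$ is itself a Lagrangian submanifold of $\mathbb{C}P^q$ rather than all of $\mathbb{C}P^q$, so the tangent frame $\{\bar E_\alpha\}_{\alpha=1}^q$ on $M_2$ together with $\{J\bar E_\beta\}_{\beta=1}^q$ forms a local orthonormal frame for $T\mathbb{C}P^q$ along $M_2$. First I would recall the key input behind Lemma \ref{Bj}, namely the classical fact (from \cite{Chen1}) that for the Segre embedding the vectors $\{B^j(E_a,\bar F_\alpha)\}$, as $E_a$ runs over an orthonormal frame of $T\mathbb{C}P^p$ and $\bar F_\alpha$ over an orthonormal frame of $T\mathbb{C}P^q$, are mutually orthonormal; equivalently $\langle B^j(X,Y),B^j(Z,W)\rangle=\langle X,Z\rangle\langle Y,W\rangle$ for $X,Z$ tangent to $\mathbb{C}P^p$ and $Y,W$ tangent to $\mathbb{C}P^q$. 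Combined with Lemma \ref{Bjprop} (which kills the $B^j$ of two vectors tangent to the same factor), this inner-product identity is all that is needed.

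Next, to prove item (1) I would apply this identity with $\{E_a\}_{a=1}^p$ an orthonormal frame on $M_1$ (hence, since $M_1$ is Lagrangian, simply an orthonormal frame of $T\mathbb{C}P^p$ along $M_1$) and with the combined collection $\{\bar E_\alpha\}_{\alpha=1}^q\cup\{J\bar E_\beta\}_{\beta=1}^q$, which by Lagrangianity of $M_2$ is an orthonormal frame of $T\mathbb{C}P^q$ along $M_2$: orthonormality of $\{\bar E_\alpha\}$ on $M_2$ gives $\langle\bar E_\alpha,\bar E_\gamma\rangle=\delta_{\alpha\gamma}$, while $\langle\bar E_\alpha,J\bar E_\beta\rangle=0$ precisely because $J\bar E_\beta$ is normal to $M_2$, and $\langle J\bar E_\beta,J\bar E_\delta\rangle=\langle\bar E_\beta,\bar E_\delta\rangle=\delta_{\beta\delta}$ since $J$ is an isometry. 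Plugging the four types of pairs from $\{E_a\otimes\bar E_\alpha,\ E_b\otimes J\bar E_\beta\}$ into the bilinear identity then yields that $\{B^j(E_a,\bar E_\alpha),B^j(E_b,J\bar E_\beta)\}$ has Gram matrix the identity, which is exactly statement (1). For item (2) I would argue identically, replacing the $\mathbb{C}P^p$-frame $\{E_a\}$ by $\{JE_a\}$, which is again an orthonormal frame of $T\mathbb{C}P^p$ along $M_1$ because $M_1$ is Lagrangian and $J$ is an isometry; the same computation gives orthonormality of $\{B^j(JE_a,\bar E_\alpha),B^j(JE_b,J\bar E_\beta)\}$.

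I do not anticipate a genuine obstacle here: the lemma is a bookkeeping reformulation of Lemma \ref{Bj} (equivalently of the Chen inner-product formula for $B^j$) once one notes that Lagrangianity of each factor lets one promote orthonormal frames of $M_1$, $M_2$ to orthonormal frames of $T\mathbb{C}P^p$, $T\mathbb{C}P^q$ by adjoining the $J$-images. The only point requiring a word of care is verifying that the $p q + p q = 2pq$ vectors in each list are genuinely distinct indices and that mixed inner products such as $\langle B^j(E_a,\bar E_\alpha),B^j(E_b,J\bar E_\beta)\rangle=\langle E_a,E_b\rangle\langle\bar E_\alpha,J\bar E_\beta\rangle=0$ vanish — and this is immediate from $\langle\bar E_\alpha,J\bar E_\beta\rangle=0$. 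Hence the proof is a two-line invocation of the bilinear identity for $B^j$ applied to the enlarged frames.
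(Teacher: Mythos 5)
Your proposal is correct and follows the same route as the paper, which states Lemma \ref{Bj2} as an adaptation of Lemma \ref{Bj} (i.e., of Chen's orthonormality property of $B^j$ on product frames) using precisely the fact that Lagrangianity of $M_1$ and $M_2$ lets one adjoin the $J$-images to obtain orthonormal frames of $T\mathbb{C}P^p$ and $T\mathbb{C}P^q$ along the factors. You merely spell out the Gram-matrix computation that the paper leaves implicit.
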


A similar computation to that in the previous case shows that the second fundamental form of the immersion $\psi$ can be written as
\begin{equation}\label{BBB2}
\Bpsi(X,Y)=B^i(X,Y)+B^j(X,Y),
\end{equation}
for any vector fields $X$ and $Y$ tangent to $\Sigma$, in this situation too. Then it is easy to see that the mean curvature vector field of $\psi$ is given by
\begin{equation}\label{H12}
H^\psi=\frac{p}{p+q}H_1+\frac{q}{p+q}H_2.
\end{equation}

Now, for any vector field $X$ tangent to $\Sigma$, we have
\begin{eqnarray*}
\nabla^{\final}_XH^{\psi}&=&\frac{p}{p+q}\left(\nabla^{\prod}_XH_1+B^j(X,H_1)\right)\\&&+\frac{q}{p+q}\left(\nabla^{\prod}_XH_2+B^j(X,H_2)\right).
\end{eqnarray*}
Specializing this formula for $X=E_a$ and then for $X=\bar E_{\alpha}$ and using Lemma \ref{Bjprop}, one obtains
$$
\nabla^{\final}_{E_a}H^{\psi}=\frac{p}{p+q}\nabla^{\mathbb{C}P^p}_{E_a}H_1+\frac{q}{p+q}B^j(E_a,H_2)
$$
and
$$
\nabla^{\final}_{\bar E_{\alpha}}H^{\psi}=\frac{q}{p+q}\nabla^{\mathbb{C}P^q}_{\bar E_{\alpha}}H_2+\frac{p}{p+q}B^j(\bar E_{\alpha},H_1),
$$
for any $a\in\{1,...,p\}$ and $\alpha\in\{1,...,q\}$. From here we easily get
\begin{equation}\label{eq:18}
\nabla_{E_a}^{\perp\psi}H^{\psi}=\frac{p}{p+q}\nabla^{\perp}_{E_a}H_1+\frac{q}{p+q}B^j(E_a,H_2),\quad A^{\psi}_{H^{\psi}}E_a=\frac{p}{p+q}A_{H_1}E_a
\end{equation}
and
\begin{equation}\label{eq:19}
\nabla_{\bar E_{\alpha}}^{\perp\psi}H^{\psi}=\frac{q}{p+q}\bar\nabla^{\perp}_{\bar E_{\alpha}}H_2+\frac{p}{p+q}B^j(\bar E_{\alpha},H_1),\quad A^{\psi}_{H^{\psi}}\bar E_{\alpha}=\frac{q}{p+q}\bar A_{H_2}\bar E_{\alpha}.
\end{equation}

\begin{remark} As in the previous case, Lemma \ref{Bj2} and the first identities of \eqref{eq:18} and \eqref{eq:19} show that $\nabla^{\perp\psi}H^{\psi}$ does not vanish identically and, therefore, $\Sigma$ cannot be a PMC submanifold in $\mathbb{C}P^{p+q+pq}$. Obviously, if $M_1$ and $M_2$ are CMC submanifolds in $\mathbb{C}P^p$ and $\mathbb{C}P^q$, respectively, so is $\Sigma$ in $\mathbb{C}P^{p+q+pq}$, while the converse is not true in general.
\end{remark}

\begin{theorem}\label{thm:bi2} If $M_1^p$ and $M_2^q$ are Lagrangian submanifolds in $\mathbb{C}P^p$ and $\mathbb{C}P^q$, respectively, then 

\begin{enumerate}

\item via the Segre embedding of $\mathbb{C}P^{p}\times \mathbb{C}P^{q}$ into
$\mathbb{C}P^{p+q+pq}$, the product $\Sigma^{p+q}=M_1^p\times M_2^q$ is a biconservative submanifold of
$\mathbb{C}P^{p+q+pq}$ if and only if $M_1^p$ and $M_2^q$ are biconservative submanifolds in $\mathbb{C}P^p$ and $\mathbb{C}P^q$, respectively;

\item $\Sigma^{p+q}$ is a proper-biharmonic submanifold in $\mathbb{C}P^{p+q+pq}$ if and only if one of the submanifolds $M_1^p$ or $M_2^q$ is minimal and the other is proper-biharmonic in $\mathbb{C}P^p$ or $\mathbb{C}P^q$, respectively.
\end{enumerate}
\end{theorem}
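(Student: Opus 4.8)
\textit{Proof proposal.} The plan is to follow the scheme of the proof of Theorem~\ref{thm:bi1}, now tracking both Lagrangian factors at once. Since $M_1$ and $M_2$ are Lagrangian, one can write $H_1=|H_1|JX_1$ and $H_2=|H_2|JY_2$ with $X_1$ a unit vector field tangent to $M_1$ and $Y_2$ a unit vector field tangent to $M_2$; hence $JH_1$, $JH_2$, and therefore $JH^\psi$, are tangent to $\Sigma$. Thus Theorem~\ref{split} applies to $\psi$, and $\Sigma$ is biconservative if and only if $4\trace\Apsi_{\nabla^{\perp\psi}_{(\cdot)}H^\psi}(\cdot)+(p+q)\grad(|H^\psi|^2)=0$, while the normal part of its biharmonic equation reads $-\Delta^{\perp\psi}H^\psi-\trace\big(\Bpsi(\cdot,\Apsi_{H^\psi}(\cdot))\big)+(p+q+3)H^\psi=0$.

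For item (1) I would compute the two terms using \eqref{eq:18}, \eqref{eq:19} and \eqref{BBB2}. Exactly as in the derivation of \eqref{eq:AV}, since $\nabla^\perp_{E_a}H_1$ is tangent to $\mathbb{C}P^p$ one gets $\Apsi_{\nabla^\perp_{E_a}H_1}E_a=A_{\nabla^\perp_{E_a}H_1}E_a$; on the other hand, from the Gauss Equation \eqref{Gauss2} of $j$ together with \eqref{eq:curv} and $\Bj(E_a,E_a)=0$ one finds $\langle\Bj(E_a,T),\Bj(E_a,H_2)\rangle=\langle H_2,T\rangle=0$ for every $T$ tangent to $\Sigma$, hence $\Apsi_{\Bj(E_a,H_2)}E_a=0$, and symmetrically for the $\bar E_\alpha$'s. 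This gives $\trace\Apsi_{\nabla^{\perp\psi}_{(\cdot)}H^\psi}(\cdot)=\frac{p}{p+q}\trace A_{\nabla^\perp_{(\cdot)}H_1}(\cdot)+\frac{q}{p+q}\trace\bar A_{\bar\nabla^\perp_{(\cdot)}H_2}(\cdot)$, and, from \eqref{H12} and $\langle H_1,H_2\rangle=0$, $\grad(|H^\psi|^2)=\frac{p^2}{(p+q)^2}\grad(|H_1|^2)+\frac{q^2}{(p+q)^2}\grad(|H_2|^2)$. As the $M_1$-summands are tangent to $M_1$ and the $M_2$-summands to $M_2$, the biconservative equation of $\psi$ splits into $4\trace A_{\nabla^\perp_{(\cdot)}H_1}(\cdot)+p\grad(|H_1|^2)=0$ and $4\trace\bar A_{\bar\nabla^\perp_{(\cdot)}H_2}(\cdot)+q\grad(|H_2|^2)=0$, i.e. into the biconservative equations of $M_1$ in $\mathbb{C}P^p$ and of $M_2$ in $\mathbb{C}P^q$, which proves (1).

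For item (2) I would in addition evaluate the normal biharmonic equation. Writing $H^\psi=\frac{p}{p+q}H_1+\frac{q}{p+q}H_2$ and using linearity, I compute $\Delta^{\perp\psi}H_1$ (and, symmetrically, $\Delta^{\perp\psi}H_2$) by splitting the trace over the $\{E_a\}$ and $\{\bar E_\alpha\}$ parts of the adapted frame. The $\{E_a\}$-part reproduces, just as in Theorem~\ref{thm:bi1} (see \eqref{eq:8}), the intrinsic normal Laplacian $-\Delta^\perp H_1$ of $M_1$ in $\mathbb{C}P^p$. The $\{\bar E_\alpha\}$-part is the new ingredient: here $\nabla^{\perp\psi}_{\bar E_\alpha}H_1=\Bj(\bar E_\alpha,H_1)$, and, since $\Bj(\bar E_\alpha,\bar E_\alpha)=0$ and $R^{\final}(\bar E_\alpha,H_1)\bar E_\alpha=-H_1$ (which follows from \eqref{eq:curv}, as $JH_1$ is tangent to $M_1$), the Codazzi Equation \eqref{Codazzi} of $j$ gives $\nabla^{\perp j}_{\bar E_\alpha}\Bj(\bar E_\alpha,H_1)=\Bj(\nabla^{\mathbb{C}P^q}_{\bar E_\alpha}\bar E_\alpha,H_1)$, while Lemma~\ref{Bj2} yields $(A^j_{\Bj(\bar E_\alpha,H_1)}\bar E_\alpha)^{\perp\psi}=H_1$; after summing over $\alpha$ and using $\sum_\alpha\bar B(\bar E_\alpha,\bar E_\alpha)=qH_2$, the $\{\bar E_\alpha\}$-part equals $q\Bj(H_1,H_2)-qH_1$. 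Together with the expansion $\trace\big(\Bpsi(\cdot,\Apsi_{H^\psi}(\cdot))\big)=\frac{p}{p+q}\trace B(\cdot,A_{H_1}(\cdot))+\frac{q}{p+q}\trace\bar B(\cdot,\bar A_{H_2}(\cdot))$ obtained from \eqref{BBB2}, \eqref{eq:18} and \eqref{eq:19}, the normal biharmonic equation of $\psi$ becomes, after multiplying by $p+q$,
$$p\big(-\Delta^\perp H_1-\trace B(\cdot,A_{H_1}(\cdot))+(p+3)H_1\big)+q\big(-\bar\Delta^\perp H_2-\trace\bar B(\cdot,\bar A_{H_2}(\cdot))+(q+3)H_2\big)+2pq\,\Bj(H_1,H_2)=0.$$
The three summands lie, respectively, in the normal bundle of $M_1$ in $\mathbb{C}P^p$, in the normal bundle of $M_2$ in $\mathbb{C}P^q$, and in the normal bundle of the Segre embedding $j$, so they vanish separately. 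The first two are the normal biharmonic equations of $M_1$ and $M_2$, and the third, since Lemma~\ref{Bj2} gives $|\Bj(H_1,H_2)|=|H_1|\,|H_2|$, forces $H_1=0$ or $H_2=0$. Combined with item (1): $\Sigma$ is biharmonic if and only if one of $M_1$, $M_2$ is minimal and the other satisfies both equations of Theorem~\ref{split}, i.e. is biharmonic; and, since $H_1\perp H_2$ implies $H^\psi\neq0$ precisely when the non-minimal factor has non-vanishing mean curvature, $\Sigma$ is proper-biharmonic exactly when that factor is proper-biharmonic.

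The step I expect to be the main obstacle is the $\{\bar E_\alpha\}$-part of $\Delta^{\perp\psi}H_1$ (and its mirror for $\Delta^{\perp\psi}H_2$): unlike in Theorem~\ref{thm:bi1}, where the totally geodesic factor $\mathbb{C}P^q$ has $\bar B\equiv0$ and produces no cross term, here $\bar B\not\equiv0$ generates the term $\Bj(H_1,H_2)$, and one must use the Codazzi equation of $j$ and Lemma~\ref{Bj2} both to check that no spurious tangential or $JTM_i$-normal contributions survive and to identify $\Bj(H_1,H_2)$ as a vector of length $|H_1|\,|H_2|$ in the normal bundle of $j$, which is exactly what makes the biharmonicity of $\Sigma$ force one of the two factors to be minimal. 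The remaining manipulations are routine bookkeeping, entirely parallel to the proof of Theorem~\ref{thm:bi1}.
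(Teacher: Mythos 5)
Your proposal is correct and follows essentially the same route as the paper: apply Theorem~\ref{split} to $\psi$, use the Gauss and Codazzi equations of $j$ together with Lemma~\ref{Bj2} to kill or identify the cross terms, and split the resulting equations along the mutually orthogonal bundles $NM_1$, $NM_2$ and the normal bundle of $j$, with the term $B^j(H_1,H_2)$ of norm $|H_1||H_2|$ forcing one factor to be minimal. Your grouped form of the normal biharmonic equation (after multiplying by $p+q$) agrees exactly with what one gets from the paper's equations \eqref{eq:41} and \eqref{eq:42}, so the only differences are organizational.
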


\begin{proof} Since $M_1$ and $M_2$ are Lagrangian submanifolds in their initial ambient spaces, we have that 
$$
JH^\psi=\frac{p}{p+q}JH_1+\frac{q}{p+q}JH_2
$$ 
is a vector field tangent to $\Sigma$. This and Theorem \ref{split} imply that the biconservative equation of $\psi:\Sigma\to\mathbb{C}P^{p+q+pq}$ is
\begin{equation}\label{eq:bicons2}
4\trace\Apsi_{\nabla_{(\cdot)}^{\perp\phi} H^\phi}(\cdot)+(p+q)\grad(|H^\psi|^2)=0.
\end{equation}

In the following, we will continue using the orthonormal frame fields $\{E_a\}_{a=1}^p$ on $M_1$ and $\{\bar E_{\alpha}\}_{\alpha=1}^{q}$ on $M_2$. 

Now, the first equation of \eqref{eq:18} and Lemma \ref{Bjprop} give
\begin{eqnarray*}
\Apsi_{\nabla_{E_a}^{\perp\psi} H^\psi}E_a&=&\frac{p}{p+q}\Apsi_{\nabla_{E_a}^{\perp} H_1}E_a+\frac{q}{p+q}\Apsi_{B^j(E_a,H_2)}E_a\\ 
&=&\frac{p}{p+q}\left(-\nabla_{E_a}^{\final}\nabla_{E_a}^\perp H_1+\nabla_{E_a}^{\perp\psi}\nabla_{E_a}^{\perp}H_1\right)\\&&+\frac{q}{p+q}\left(-\nabla^{\final}_{E_a}B^j(E_a,H_2)+\nabla^{\perp\psi}_{E_a}B^j(E_a,H_2)\right) \\&=&\frac{p}{p+q}\left(-\nabla_{E_a}^{\prod}\nabla_{E_a}^\perp H_1-B^j(E_a,\nabla_{E_a}^\perp H_1)+\nabla_{E_a}^{\perp\psi}\nabla_{E_a}^{\perp}H_1\right)\\&&+\frac{q}{p+q}\Apsi_{B^j(E_a,H_2)}E_a \\&=&\frac{p}{p+q}\left(-\nabla_{E_a}^{\mathbb{C}P^p}\nabla_{E_a}^\perp H_1+\nabla_{E_a}^{\perp\phi}\nabla_{E_a}^{\perp}H_1\right)+\frac{q}{p+q}\Apsi_{B^j(E_a,H_2)}E_a.
\end{eqnarray*}
Taking the inner product with any $\bar E_{\alpha}$ one obtains
\begin{eqnarray*}
\langle \Apsi_{\nabla_{E_a}^{\perp\psi} H^\psi}E_a,\bar E_{\alpha}\rangle&=&\frac{q}{p+q}\langle\Apsi_{B^j(E_a,H_2)}E_a,\bar E_{\alpha}\rangle\\&=&\frac{q}{p+q}\langle B^j(E_a,H_2),B^{\psi}(E_a,\bar E_{\alpha})\rangle\\&=&\frac{q}{p+q}\langle B^j(E_a,H_2),B^i(E_a,\bar E_{\alpha})+B^j(E_a,\bar E_{\alpha})\rangle\\&=&\frac{q}{p+q}\langle B^j(E_a,H_2),B^j(E_a,\bar E_{\alpha})\rangle.
\end{eqnarray*}
On the other hand, a simple computation, using \eqref{eq:curv}, shows that 
$$
R^{\final}(E_a,H_2)E_a=-H_2
$$ 
and, therefore, from the Gauss Equation \eqref{Gauss2} of the Segre embedding, it follows that $\langle B^j(E_a,H_2),B^j(E_a,\bar E_{\alpha})\rangle=0$. Thus, $\Apsi_{\nabla_{E_a}^{\perp\psi} H^\psi}E_a$ is tangent to $M_1$. 

Next, we compute
$$
\langle \Apsi_{\nabla_{E_a}^{\perp\psi} H^\psi}E_a,E_b\rangle=\frac{p}{p+q}\langle A_{\nabla_{E_a}^{\perp} H_1}E_a,E_b\rangle,\quad\forall b\in\{1,...,p\},
$$
and conclude with
\begin{equation}\label{eq:23}
\Apsi_{\nabla_{E_a}^{\perp\psi} H^\psi}E_a=\frac{p}{p+q} A_{\nabla_{E_a}^{\perp} H_1}E_a,\quad\forall a\in\{1,...,p\}.
\end{equation}

Working in the same way, we can also prove that
\begin{equation}\label{eq:24}
\Apsi_{\nabla_{\bar E_{\alpha}}^{\perp\psi} H^\psi}\bar E_{\alpha}=\frac{q}{p+q}\bar A_{\bar\nabla_{\bar E_{\alpha}}^{\perp} H_2}\bar E_{\alpha},\quad\forall\alpha\in\{1,...,q\}.
\end{equation}

From equation \eqref{H12}, we easily get
\begin{equation}\label{eq:20}
\grad(|H^{\psi}|^2)=\frac{p^2}{(p+q)^2}\grad(|H_1|^2)+\frac{q^2}{(p+q)^2}\grad(|H_2|^2).
\end{equation}

Replacing \eqref{eq:23}, \eqref{eq:24} and \eqref{eq:20} in \eqref{eq:bicons2}, one obtains that the biconservative equation of the immersion $\psi$ is equivalent to
$$
4\trace A_{\nabla_{(\cdot)}^{\perp}H_1}(\cdot)+p\grad(|H_1|^2)=0\quad\textnormal{and}\quad
4\trace A_{\nabla_{(\cdot)}^{\perp}H_1}(\cdot)+q\grad(|H_2|^2)=0,
$$
as the left-hand side terms are tangent one to $\mathbb{C}P^p$ and the other to $\mathbb{C}P^q$. Since $M_1$ and $M_2$ are Lagrangian submanifolds of $\mathbb{C}P^p$ and $\mathbb{C}P^q$, respectively, these are their biconservative equations and we conclude the first part of the theorem.

The normal part of the biharmonic equation of the immersion $\psi$ is
\begin{equation}\label{eq:bihnormalpsi}
-\Delta^{\perp\psi}H^{\psi}-\trace(B^{\psi}(\cdot,\Apsi_{H^{\psi}}\cdot))+(p+q+3)H^{\psi}=0.
\end{equation}

The first term in \eqref{eq:bihnormalpsi} can be written as 
\begin{eqnarray}\label{eq:25}
-\Delta^{\perp\psi}H^{\psi}&=&\sum_{a=1}^p\left(\nabla^{\perp\psi}_{E_a}\nabla^{\perp\psi}_{E_a}H^{\psi}-\nabla^{\perp\psi}_{\nabla^{\Sigma}_{E_a}E_a}H^{\psi}\right)\\\nonumber&&+\sum_{\alpha=1}^{q}\left(\nabla^{\perp\psi}_{\bar E_{\alpha}}\nabla^{\perp\psi}_{\bar E_{\alpha}}H^{\psi}-\nabla^{\perp\psi}_{\nabla^{\Sigma}_{\bar E_{\alpha}}\bar E_{\alpha}}H^{\psi}\right).
\end{eqnarray}

From the first equation \eqref{eq:18}, we have
\begin{equation}\label{eq:26}
\nabla^{\perp\psi}_{E_a}\nabla^{\perp\psi}_{E_a}H^{\psi}=\frac{p}{p+q}\nabla^{\perp\psi}_{E_a}\nabla^{\perp}_{E_a}H_1+\frac{q}{p+q}\nabla^{\perp\psi}_{E_a}B^j(E_a,H_2).
\end{equation}

Now, using \eqref{eq:AV} and Lemma \ref{Bjprop}, we can compute
\begin{eqnarray}\label{eq:27}
\nabla^{\perp\psi}_{E_a}\nabla_{E_a}^{\perp}H_1&=&\Apsi_{\nabla^{\perp}_{E_a}H_1}E_a+\nabla^{\final}_{E_a}\nabla_{E_a}^{\perp}H_1\\\nonumber&=&A_{\nabla^{\perp}_{E_a}H}E_a+\nabla^{\prod}_{E_a}\nabla_{E_a}^{\perp}H_1+B^j(E_a,\nabla_{E_a}^{\perp}H_1)\\\nonumber&=&A_{\nabla^{\perp}_{E_a}H_1}E_a+\nabla^{\mathbb{C}P^p}_{E_a}\nabla_{E_a}^{\perp}H_1\\\nonumber&=&\nabla_{E_a}^{\perp}\nabla_{E_a}^{\perp}H_1.
\end{eqnarray}

Next, we have
\begin{equation}\label{eq:28}
\nabla^{\perp\psi}_{E_a}B^j(E_a,H_2)=\Apsi_{B^j(E_a,H_2)}E_a+\nabla^{\final}_{E_a}B^j(E_a,H_2).
\end{equation}

Since $B^j(\bar E_{\alpha},\bar E_{\alpha})=0$ and $R^{\final}(\bar E_{\alpha},H_2)\bar E_{\alpha}=-H_2$, from the Codazzi Equation of $j$, we get $(\nabla^{\perp j}_{E_a}B^j)(E_a,H_2)=0$ and then $\nabla^{\perp j}_{E_a}B^j(E_a,H_2)=B^j(\nabla^{\mathbb{C}P^p}_{E_a}E_a,H_2)$,
which leads to
$$
\nabla^{\final}_{E_a}B^j(E_a,H_2)=-A^j_{B^j(E_a,H_2)}E_a+B^j(\nabla^{\mathbb{C}P^p}_{E_a}E_a,H_2).
$$

Replacing in \eqref{eq:28}, one obtains
$$
\nabla^{\perp\psi}_{E_a}B^j(E_a,H_2)=B^j(\nabla^{\mathbb{C}P^p}_{E_a}E_a,H_2)-(A^j_{B^j(E_a,H_2)}E_a)^{\perp\psi}.
$$

From Lemma \ref{Bj2}, since $H_2=|H_2|JX$ for some unit vector field $X$ tangent to $M_2$, we get $(A^j_{B^j(E_a,H_2)}E_a)^{\perp\psi}=H_2$ and then
\begin{equation}\label{eq:36}
\nabla^{\perp\psi}_{E_a}B^j(E_a,H_2)=B^j(\nabla^{\mathbb{C}P^p}_{E_a}E_a,H_2)-H_2.
\end{equation}

Putting together \eqref{eq:26}, \eqref{eq:27} and \eqref{eq:36}, it follows
\begin{equation}\label{eq:37}
\nabla^{\perp\psi}_{E_a}\nabla^{\perp\psi}_{E_a}H^{\psi}=\frac{p}{p+q}\nabla^{\perp}_{E_a}\nabla^{\perp}_{E_a}H_1+\frac{q}{p+q}B^j(\nabla^{\mathbb{C}P^p}_{E_a}E_a,H_2)-\frac{q}{p+q}H_2.
\end{equation}
In the exact same way, we also get
\begin{equation}\label{eq:38}
\nabla^{\perp\psi}_{\bar E_{\alpha}}\nabla^{\perp\psi}_{\bar E_{\alpha}}H^{\psi}=\frac{q}{p+q}\bar\nabla^{\perp}_{\bar E_{\alpha}}\bar\nabla^{\perp}_{\bar E_{\alpha}}H_2+\frac{p}{p+q}B^j(\nabla^{\mathbb{C}P^q}_{\bar E_{\alpha}}\bar E_{\alpha},H_1)-\frac{p}{p+q}H_1.
\end{equation}

From the first equation of \eqref{eq:18}, one obtains
\begin{equation}\label{eq:39}
\nabla^{\perp\psi}_{\nabla^{\Sigma}_{E_a}E_a}H^{\psi}=\frac{p}{p+q}\nabla^{\perp}_{\nabla_{E_a}E_a}H_1+\frac{q}{p+q}B^j(\nabla_{E_a}E_a,H_2)
\end{equation}
and, from the first equation of \eqref{eq:19},
\begin{equation}\label{eq:40}
\nabla^{\perp\psi}_{\nabla^{\Sigma}_{\bar E_{\alpha}}\bar E_{\alpha}}H^{\psi}=\frac{q}{p+q}\nabla^{\perp}_{\bar\nabla_{\bar E_{\alpha}}\bar E_{\alpha}}H_2+\frac{p}{p+q}B^j(\bar\nabla_{\bar E_{\alpha}}\bar E_{\alpha},H_1).
\end{equation}

Finally, from \eqref{eq:25}, \eqref{eq:37}, \eqref{eq:38}, \eqref{eq:39} and \eqref{eq:40}, it follows that
\begin{equation}\label{eq:41}
-\Delta^{\perp\psi}H^{\psi}=-\frac{p}{p+q}\Delta^{\perp}H_1-\frac{q}{p+q}\bar\Delta^{\perp}H_2+\frac{pq}{p+q}\left(2B^j(H_1,H_2)-H_1-H_2\right).
\end{equation}

The second term in the normal part \eqref{eq:bihnormalpsi} of the biharmonic equation becomes, by the meaning of \eqref{BBB2} and the second equations of \eqref{eq:18} and \eqref{eq:19}, 
\begin{eqnarray}\label{eq:42}
\nonumber\trace B^{\psi}(\cdot,\Apsi_{H^{\psi}}\cdot)&=&\sum_{a=1}^p B^{\psi}(E_a,\Apsi_{H^{\psi}}E_a)+\sum_{\alpha=1}^q B^{\psi}(\bar E_{\alpha},\Apsi_{H^{\psi}}\bar E_{\alpha})\\&=&\frac{p}{p+q}\sum_{a=1}^p\left(B^i(E_a,A_{H_1}E_a)+B^j(E_a,A_{H_1}E_a)\right)\\\nonumber&&+\frac{q}{p+q}\sum_{\alpha=1}^q\left(B^i(\bar E_{\alpha},\bar A_{H_2}\bar E_{\alpha})+B^j(\bar E_{\alpha},\bar A_{H_2}\bar E_{\alpha})\right)\\\nonumber&=&\frac{p}{p+q}\sum_{a=1}^p B^i(E_a,A_{H_1}E_a)+\frac{q}{p+q}\sum_{\alpha=1}^q B^i(\bar E_{\alpha},\bar A_{H_2}\bar E_{\alpha})\\\nonumber&=&\frac{p}{p+q}\sum_{a=1}^p B(E_a,A_{H_1}E_a)+\frac{q}{p+q}\sum_{\alpha=1}^q \bar B(\bar E_{\alpha},\bar A_{H_2}\bar E_{\alpha})\\\nonumber&=&\frac{p}{p+q}\trace B(\cdot,A_{H_1}\cdot)+\frac{q}{p+q}\trace \bar B(\cdot,\bar A_{H_2}\cdot).
\end{eqnarray}

Replacing \eqref{H12}, \eqref{eq:41} and \eqref{eq:42} in \eqref{eq:bihnormalpsi} we see that the normal part of the biharmonic equation of $\psi$ is equivalent to the following three equations
$$
-\Delta^{\perp} H_1-\trace B(\cdot,A_{H_1}\cdot)+(p+3)H_1=0,
$$
$$
-\bar\Delta^{\perp} H_2-\trace\bar B(\cdot,\bar A_{H_2}\cdot)+(q+3)H_2=0
$$
and
$$
B^j(H_1,H_2)=0.
$$

The third equation can be written as 
$$
|B^j(H_1,H_2)|=|H_1||H_2|\left\vert B^j\left(\frac{H_1}{|H_1|},\frac{H_2}{|H_2|}\right)\right\vert=0
$$ 
and, from Lemma \ref{Bj2}, this reduces to $|H_1||H_2|=0$ which completes the proof.
\end{proof}

\begin{remark} Consider two curves $\gamma_1$ and $\gamma_2$ in $\mathbb{C}P^1$ with constant curvatures $\kappa_1$ and $\kappa_2$, respectively. This means that they are biconservative (see \cite{FLMO}) and, since any curve in $\mathbb{C}P^1$ is Lagrangian, from Theorem \ref{thm:bi2} it follows that $\psi:\Sigma^2=\gamma_1\times\gamma_2\to\mathbb{C}P^3$ is a biconservative surface, which is also CMC with $|H^{\psi}|^2=(p^2\kappa_1^2+q^2\kappa_2^2)/(p+q)^2$, but not PMC (see Remark \ref{rem:2}). 

On the other hand, a result in \cite{BCFO} shows that a CMC biconservative surface in $\mathbb{C}P^2$ with $J((JH)^{\top})$ tangent to the surface is a PMC surface. As $\Sigma^2$ satisfies all these conditions but it is not PMC, we see that the above mentioned result only holds if the surface lies in $\mathbb{C}P^2$ and also that the codimension of $\Sigma^2$ cannot be reduced (i.e., the surface does not lie in $\mathbb{C}P^2$). 

Moreover, we also know that a PMC biconservative surface in $\mathbb{C}P^n$ with $JH$ tangent to the surface lies in $\mathbb{C}P^2$ (see \cite{BCFO}). Our example shows that this only works for PMC surfaces, the CMC condition not being sufficient for this to happen.
\end{remark}

\begin{remark} There are plenty of examples of biconservative and proper-biharmonic Lagrangian submanifolds in complex projective spaces. For example, biconservative Lagrangian $H$-umbilical submanifolds are described (in a local approach) in \cite{S3}, while all proper-biharmonic Lagrangian $H$-umbilical submanifolds are determined (also locally) in \cite{S2}, after an earlier study on such surfaces was done in \cite{S1}. Also, biconservative and proper-biharmonic totally real (and in particular Lagrangian) curves in complex space forms were determined in \cite{FLMO}. Moreover, proper-biharmonic parallel Lagrangian submanifolds in $\mathbb{C}P^3$ were found in \cite{FO}. Minimal Lagrangian submanifolds in complex projective spaces were intensively studied and many characterization results as well as explicit examples were obtained (see, for example, \cite{B,CU,Chen4}). Therefore, Theorems~\ref{thm:bi1}~and~\ref{thm:bi2}, together with these results, provide two large classes of proper-biharmonic submanifolds with arbitrary dimensions and codimensions.
\end{remark}


\begin{thebibliography}{99}

\bibitem{B} R.~L.~Bryant, {\it Minimal Lagrangian submanifolds of Kähler-Einstein manifolds}, Differential geometry and differential equations (Shanghai, 1985), 1--12, Lecture Notes in Math., 1255, Springer, Berlin, 1987.

\bibitem{BCFO} H.~Bibi, B.-Y.~Chen, D.~Fetcu, and C.~Oniciuc, {\it PMC biconservative surfaces in complex space forms}, Math. Nachr., to appear.

\bibitem{C-M-O-P} R. Caddeo, S. Montaldo, C. Oniciuc, and P. Piu, {\it Surfaces in three-dimensional space forms with divergence-free stress-bienergy tensor}, Ann. Mat. Pura Appl. (4) 193(2014), 529--550.

\bibitem{CU} I.~Castro and F.~ Urbano, {\it New examples of minimal Lagrangian tori in the complex projective plane}, Manuscripta Math. 85(1994), 265--281.

\bibitem{Chen0} B.-Y.~Chen, {\it Some open problems and conjectures on submanifolds of finite type}, Soochow J. Math. 17(1991), 169--188.

\bibitem{Chen1} B.-Y.~Chen, {\it CR-submanifolds of a K\"ahler manifold. I, II}, J. Differential Geometry 16(1981), 305--322, 493--509.

%\bibitem{Chen2} B.-Y.~Chen, {\it CR-submanifolds of a K\"ahler manifold. II}, J. Differential Geometry 16(1981), 493--509.

\bibitem{Chen3} B.-Y.~Chen, {\it Segre embedding and related maps and immersions in differential geometry}, Arab J. Math. Sci. 8(2002), 1--39.

%\bibitem{CK1} B.-Y.~Chen and W.-E.~Kuan, {\it Sous-vari\'et\'es produits et plongement de Segre}, C. R. Acad. Sci. Paris S\'er. I Math. 296(1983), 689--690. 

%\bibitem{CK2} B.-Y.~Chen and W.-E.~Kuan, {\it The Segre imbedding and its converse}, Ann. Fac. Sci. Toulouse Math. (5) 7(1985), 1--28.

\bibitem{Chen4} B.-Y.~Chen,{\it Riemannian geometry of Lagrangian submanifolds}, Taiwanese J. Math. 5(2001), 681--723.

\bibitem{ES} J.~Eells and J.~H.~Sampson, \textit{Harmonic mappings of Riemannian manifolds}, Amer. J. Math. 86(1964), 109--160.

\bibitem{FLMO} D.~Fetcu, E.~Loubeau, S.~Montaldo, and C.~Oniciuc, {\it Biharmonic submanifolds of $\mathbb{C}P^n$}, Math. Z. 266(2010), 505--531.

\bibitem{FO} D.~Fetcu and C. Oniciuc, {\it Biharmonic integral $C$-parallel submanifolds in $7$-dimensional Sasakian space forms}, Tohoku Math. J. (2) 64(2012), 195--222.

%\bibitem{F-O-P} D. Fetcu, C. Oniciuc, and A. L. Pinheiro, {\it CMC biconservative surfaces in $\mathbb{S}^{n}\times\mathbb{R}$ and $\mathbb{H}^{n}\times\mathbb{R}$}, J. Math. Anal. Appl. 425(2015), 588--609.

\bibitem{Fetcu-Oniciuc-Survey} D. Fetcu and C. Oniciuc, {\it Biharmonic and biconservative hypersurfaces in space forms}, Differential geometry and global analysis--in honor of Tadashi Nagano, 65--90, Contemp. Math., 777, Amer. Math. Soc., [Providence], RI, 2022.

\bibitem{I-I-U} T. Ichiyama, J. I. Inoguchi, and H. Urakawa, {\it Bi-harmonic maps and bi-Yang-Mills fields}, Note Mat. 28(2009), 233--275.

\bibitem{Jiang2009} G. Y. Jiang, {\it 2-harmonic maps and their first and second variational formulas}, Translated from the Chinese by Hajime Urakawa. Note Mat. 28(2009), 209--232.

\bibitem{Jiang87} G. Y. Jiang, {\it The conservation law for 2-harmonic maps between Riemannian manifolds}, Acta Math. Sinica 30(1987),  220--225.

%\bibitem{J2} G. Y. Jiang, {\it Some nonexistence theorems on 2-harmonic and isometric immersions in Euclidean space}, Chinese Ann. Math. Ser. B 8(1987), 377--383.

\bibitem{Jiang} G. Y. Jiang, {\it 2-harmonic maps and their first and second variational formulas}, Chinese Ann. Math. Ser. A 7(1986), 389--402.

\bibitem{LMO} E. Loubeau, S.~Montaldo, and C. Oniciuc, {\it The stress-energy tensor for biharmonic maps}, Math. Z. 259(2008), 503--524.

\bibitem{Manfio-Turgay} F. Manfio, N. C. Turgay, and A. Upadhyay, {\it Biconservative submanifolds in $\mathbb{S}^{n}\times\mathbb{R}$ and $\mathbb{H}^{n}\times\mathbb{R}$}, J. Geom. Anal. 29(2019), 283--298.

%\bibitem{M-O-R-Euclidean} S. Montaldo, C. Oniciuc, and  A. Ratto, {\it Proper biconservative immersions into the Euclidean space}, Ann. Mat. Pura Appl. (4) 195(2016), no. 2, 403--422.

\bibitem{M.O.R.} S. Montaldo, C. Oniciuc, and A. Ratto, {\it Biconservative surfaces}, J. Geom. Anal. 26(2016), 313--329.

\bibitem{NT} H.~Nakagawa and R.~Takagi, {\it On locally symmetric Kaehler submanifolds in a complex projective space}, J. Math. Soc. Japan 28(1976), 638--667.

\bibitem{Nistor1} S. Nistor, {\it On biconservative surfaces}, Differential Geom. Appl. 54(2017), 490--502.

\bibitem{HT} C.~Oniciuc, {\it Biharmonic submanifolds in space forms}, Habilitation Thesis, www.researchgate.net, https://doi.org/10.13140/2.1.4980.5605, 2012.

\bibitem{Chen-Ou} Y. L. Ou and B. Y. Chen, {\it Biharmonic submanifolds and biharmonic maps in Riemannian geometry}, World Scientific Publishing Co. Pte. Ltd., Hackensack, NJ, 528 pp., 2020.

\bibitem{S1} T.~Sasahara, {\it Biminimal Lagrangian surfaces of constant mean curvature in complex space forms}, Differential Geom. Appl. 27(2009), 647--652. 

\bibitem{S2} T.~Sasahara, {\it Biminimal Lagrangian $H$-umbilical submanifolds in complex space forms}, Geom. Dedicata 160(2012), 185--193.

%\bibitem{Sasahara-surfaces} T. Sasahara, {\it Surfaces in Euclidean 3-space whose normal bundles are tangentially biharmonic}, Arch. Math. 
%99(2012), no. 3, 281--287.

\bibitem{S3} T.~Sasahara, {\it Tangentially biharmonic Lagrangian $H$-umbilical submanifolds in complex space forms}, Abh. Math. Semin. Univ. Hambg. 85(2015), 107--123.

\bibitem{Sasahara-2019} T. Sasahara, {\it Classification theorems for biharmonic real hypersurfaces in a complex projective space}, Results Math. 74(2019), Paper No. 136, 10 p.

\bibitem{Segre} C. Segre, {\it Sulle variet\`a che rappresentano le coppie di punti di due piani o spazi}, Rend. Cir. Mat. Palermo 5(1891), 192--204.

%\bibitem{Turgay} N. C. Turgay, {\it H-hypersurfaces with three distinct principal curvatures in the Euclidean spaces}, Ann. Mat. Pura Appl. (4) 194(2015), no. 6,  1795--1807.

\bibitem{TT} R.~Takagi and M.~Takeuchi, {\it Degree of symmetric K\"ahlerian submanifolds of a complex projective space}, Osaka Math. J. 14(1977), 501--518.

\bibitem{T.U.} N. C. Turgay and  A. Upadhyay, {\it On biconservative hypersurfaces in 4-dimensional Riemannian space forms}, Math. Nachr. 292(2019), 905--921.



\end{thebibliography}
\end{document}